\documentclass[12pt,reqno]{amsart}

\usepackage{amssymb,array}
\usepackage{amsmath}
\usepackage{amsthm}
\usepackage{amsbsy,mathrsfs}
\usepackage{bm}
\usepackage[english]{babel}
\usepackage{graphicx}
\usepackage{color}
\usepackage{graphicx}
\usepackage{color}
\usepackage{hyperref}
\usepackage[toc,page]{appendix}
\usepackage{thmtools}
\usepackage{thm-restate}
\usepackage{textcase}

\usepackage{cleveref}


\theoremstyle{plain}
\newtheorem{theorem}{Theorem}[section]
\newtheorem*{theorem*}{Theorem}
\newtheorem{lemma}[theorem]{Lemma}

\newtheorem{corollary}[theorem]{Corollary}

\theoremstyle{definition}
\newtheorem{definition}[theorem]{Definition}

\newcommand{\kau}{\mathscr{K}}
\newcommand{\ask}{\mathscr{A}}
\newcommand{\al}{\mathfrak{a}}
\newcommand{\gl}{\mathfrak{g}}
\newcommand{\g}{\gamma}

\newcommand{\C}{\mathbb{C}}

\newcommand{\Z}{\mathbb{Z}}

\newcommand{\R}{\mathbb{R}}
\newcommand{\U}{\mathcal{U}}
\newcommand{\s}{\mathcal{S}}
\newcommand{\pa}{\mathscr{P}}
\newcommand{\e}{\epsilon}

\renewcommand{\H}{{\mathbb{H}}}

\newcommand{\la}{\langle}
\newcommand{\ra}{\rangle}
\newcommand{\x}{\widetilde{x}}
\newcommand{\y}{\widetilde{y}}

\newcommand{\tpi}{\widetilde{\pi}}
\newcommand{\M}{\mathcal{M}}
\newcommand{\T}{\mathcal{T}}

\newcommand\conju[1]{\widetilde{#1}}

\newcommand{\G}{\mathcal{G}}
\newcommand{\GW}{\mathcal{GW}}

\theoremstyle{plain} 
\newcommand{\thistheoremname}{}
\newtheorem*{genericthm*}{\thistheoremname}

%
\textwidth=16cm \textheight=22cm \topmargin=0.5cm 
\oddsidemargin=0.5cm \evensidemargin=0.5cm \pagestyle{plain}

\begin{document}
	\title[Center of Poisson and skein]{Center of Poisson and skein algebras associated to loops on surfaces
	}
	
	\author{ Arpan Kabiraj}
	\address{Department of Mathematics, Indian Institute of Technology Palakkad}
	\email{arpaninto@iitpkd.ac.in}
	
	\begin{abstract}
	
	  We discuss and develop a systematic method to compute the Poisson center (Casimir)  of various Poisson algebras associated to loops on orientable surfaces (possibly with boundary and punctures) introduced by Goldman and Wolpert in 80's while studying Thurston's earthquakes deformations.  Our computation extends a result of Etingof to all finite type hyperbolic surfaces.
	  We use these methods to compute the center of various skein algebras introduced by Turaev for the quantization of these Poisson algebras.  As another application of our results we compute the center of homotopy skein algebra introduced by Hoste and Przytycki. 
	\end{abstract}
	\maketitle
	
	\section{Introduction}
	
Let $\Sigma$ be an oriented surface, possibly with boundary and punctures. We assume that the Euler characteristic of $\Sigma$ is negative so that $\Sigma$ admits a hyperbolic metric. We call such a surface a \emph{hyperbolic surface}. We denote the fundamental group of $\Sigma$ by $\pi_1(\Sigma)$ and the set of all free homotopy classes of oriented (respectively unoriented) closed curves  in $\Sigma$ by $\pi$ (respectively $\tpi$).  Unless otherwise specified, we assume $K$ to be a commutative ring with identity containing the ring of integers $\Z$.  
For any set $S$, we denote by $KS$ the free $K$-module generated by $S$.

In \cite{goldman_invariant_1986}, Goldman discovered a Lie bracket on $K\pi$ while studying the symplectic structure on the moduli space of representations of $\pi_1(\Sigma)$.
 The module $K\pi$ with this Lie bracket is known as the \emph{Goldman Lie algebra} which we denote by $\G$. He also showed that $\G$ admits a natural Lie subalgebra on the submodule $K\tpi$.
 This Lie subalgebra is known as the \emph{Thurston-Wolpert-Goldman Lie algebra} which we denote by $\GW$.  See Section \ref{sec:prel} for definitions and details.

Goldman's work was inspired from the work of Wolpert (\cite{wolpert1983symplectic}, \cite{wolpert_fenchel-nielsen_1982}). In fact the Lie algebra $\GW$ was implicit in \cite{wolpert1983symplectic} (Wolpert called it \emph{twist lattice}).

The symmetric algebras $\s (\G )$ and $\s (\GW )$ of $\G$ and $\GW$ respectively, admit natural Poisson algebra structures. If we allow Poisson algebras to be non-commutative then the universal enveloping algebras $\U (\G )$ and $\U (\GW )$ also become Poisson algebras.  Following Turaev, we (informally) call these Poisson algebras, the \emph{Poisson algebras of loops} on $\Sigma$.

In \cite{turaev1991skein}, Turaev introduced various skein algebras associated to the set of isotopy classes of links in the  three manifold $\Sigma\times I$ for the quantization of the Poisson algebras of loops.
In \cite{hoste1990homotopy}, Hoste and Przytycki  independently gave another quantization of the Poisson algebras of loops in terms of \emph{homotopy skein algebras}.
For recent development in the Poisson structure of the moduli space of representations and various skein algebras associated to them, see \cite{turaev2019loops}, \cite{turaev2019topological}, \cite{turaev2020quasi}, \cite{charles2009multicurves}, \cite{marche2011kauffman}, \cite{MR3950651}, \cite{MR3709650}, \cite{MR2326938}, \cite{MR3885180}, \cite{marche2021valuations}.

The goal of this paper is twofold. Firstly to compute the \emph{Poisson center} of the Poisson algebras of loops mentioned above. Secondly to use the quantizations of Turaev \cite{turaev1991skein} and Hoste and Przytycki \cite{hoste1990homotopy} to compute the center of various skein algebras. 

Now we state the main results of the paper. The Poisson center $\mathcal{Z}(A)$, of a Poisson algebra $(A,\{,\})$ is the subalgebra consists of elements $y\in A$ such that $\{x,y\}=0$ for all $x\in A$. Elements of $\mathcal{Z}(A)$ are also known as \emph{Casimir elements}. The center $\mathcal{Z}(A)$ plays an important role in the study of representations of $A$. 

In an earlier paper \cite{chas2020lie} with Moira Chas, the author computed the Poisson center of the Poisson algebras $\U(\GW)$ and $\s(\GW)$. In the first main result of this paper we compute the Poisson center of the Poisson algebras $\U(\G)$ and $\s(\G)$ generalizing the result of Etingof \cite{etingof2006casimirs} to all finite type hyperbolic surfaces.

\begin{theorem*}
	For a hyperbolic surface $\Sigma$, the Poisson center of the Poisson algebras $\U(\G)$ and $\s(\G)$
	are generated by three types of homotopy classes of oriented curves: 1) constant curve, 2) curves homotopic to boundaries and 3) curves homotopic to punctures.   
\end{theorem*}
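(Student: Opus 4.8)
The plan is to prove two inclusions. For the inclusion $\supseteq$: a free homotopy class that is trivial, freely homotopic to a boundary component, or freely homotopic to a loop around a puncture can be pushed into a collar of that boundary component (respectively into a punctured neighbourhood of the puncture), hence realised disjointly from any representative of any other free homotopy class; since the Goldman bracket may be computed from transverse --- in particular from disjoint --- representatives, $[\gamma,\delta]=0$ in $\G$ for all $\delta\in\pi$ whenever $\gamma$ is of one of these three types. Thus each such $\gamma$ is a Casimir of $\G$. Because the Poisson bracket of $\s(\G)$, and likewise the commutator of $\U(\G)$ (which is the relevant Poisson bracket there), is a biderivation of the associative product restricting to the Goldman bracket on $\G$, the commutative subalgebra $\mathcal{B}$ generated by the classes of these three types --- inside $\s(\G)$, respectively inside $\U(\G)$ --- lies in $\mathcal{Z}(\s(\G))$, respectively $\mathcal{Z}(\U(\G))$.

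For the reverse inclusion I would handle $\s(\G)$ first. Since $\s(\G)$ is graded by polynomial degree and its Poisson bracket has degree $-1$, the homogeneous components of a Casimir are again Casimirs, so I may assume $z=\sum_{i\in I}c_iM_i$ is homogeneous of degree $n$, where the $M_i=\gamma_{i1}\cdots\gamma_{in}$ are pairwise distinct monomials in $\pi$ and $c_i\in K\setminus\{0\}$. Call a class \emph{essential} if it is neither trivial nor a power of a boundary or puncture class; this is equivalent to having positive geometric intersection number with some class, and then even with some simple closed curve. Fix a complete hyperbolic metric on $\Sigma$, represent every class by its geodesic, and let the complexity of a class be its geodesic length. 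Suppose, for contradiction, that $z\notin\mathcal{B}$; then some essential class occurs among the factors of the $M_i$, and I pick one, $\gamma^{\ast}$, of maximal complexity among these, occurring in, say, $M_1$.

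The crux is a geometric non-cancellation lemma: there exist a simple closed curve $\alpha$ and a class $\mu$ occurring in $[\gamma^{\ast},\alpha]$ with coefficient $\pm1$ such that (i) the complexity of $\mu$ exceeds that of every class occurring as a factor in $z$, and (ii) among the finitely many essential classes $\beta$ occurring as factors in $z$, $\mu$ occurs in $[\beta,\alpha]$ only for $\beta=\gamma^{\ast}$, where moreover it is produced by a single intersection point. Granting this, write each $M_i=P_iE_i$ with $P_i$ a product of boundary and puncture classes and $E_i$ a product of essential ones; since boundary and puncture classes are Casimirs, $\{M_i,\alpha\}=P_i\{E_i,\alpha\}=P_i\sum_j\bigl(\prod_{k\neq j}\gamma_{ik}\bigr)[\gamma_{ij},\alpha]$ with $j$ running over the essential factors. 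Let $N$ be the monomial obtained from $M_1$ by replacing one copy of $\gamma^{\ast}$ by $\mu$. By (i), a monomial equal to $N$ can only be created by a smoothing of a factor against $\alpha$ that yields $\mu$; by (ii), that factor must be a copy of $\gamma^{\ast}$ in some $M_i$ with $P_i=P_1$ and the remaining factors matching, which forces $M_i=M_1$. Hence the coefficient of $N$ in $\{z,\alpha\}$ equals $\pm c_1$, up to the positive-integer multiplicity of $\gamma^{\ast}$ in $M_1$ (which one arranges, or handles, so that the coefficient is nonzero in $K$), contradicting $\{z,\alpha\}=0$. Therefore $z\in\mathcal{B}$, so $\mathcal{Z}(\s(\G))=\mathcal{B}$.

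Finally, for $\U(\G)$ with its commutator bracket I would pass to the Poincar\'e--Birkhoff--Witt filtration, whose associated graded is $\s(\G)$ with the Kirillov--Kostant Poisson bracket: the symbol of a central element of $\U(\G)$ is a Casimir of $\s(\G)$, hence lies in $\mathcal{B}$ by the previous step; subtracting the corresponding element of $\mathcal{B}$, which is central by the first paragraph, strictly lowers the filtration order, and induction gives $\mathcal{Z}(\U(\G))\subseteq\mathcal{B}$, the reverse inclusion being the first paragraph. The main obstacle is the non-cancellation lemma. It must hold uniformly for \emph{all} essential classes, so that separating simple closed curves (which meet every class an even number of times), genuinely non-simple primitive curves, and nontrivial powers of essential curves are handled by one construction; and the quantitative control behind clauses (i) and (ii) --- that a suitably chosen, suitably long test geodesic $\alpha$ produces a smoothing class of controlled, large length which is not accidentally produced from the other factors of $z$ --- is where the hyperbolic geometry (the collar lemma, the behaviour of geodesic length under surgery) enters. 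This is the content of the ``systematic method'' announced in the abstract.
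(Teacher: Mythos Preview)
Your framework is sound: the inclusion $\mathcal{B}\subseteq\mathcal{Z}$ is immediate, the homogeneity reduction in $\s(\G)$ is correct, and your passage from $\s(\G)$ to $\U(\G)$ via the PBW filtration is a clean standard argument (the paper instead uses that the canonical module isomorphism $\s(\G)\to\U(\G)$ intertwines the two $\G$-actions, which amounts to the same thing).

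However, your non-cancellation lemma is the entire content of the theorem, and you have not proved it---you say so yourself. More to the point, the paper's ``systematic method'' is \emph{not} the one you conjecture. You organise the argument around the \emph{longest} essential factor $\gamma^{\ast}$ of $z$ and seek a single simple test curve $\alpha$; the paper does the opposite. It fixes an \emph{arbitrary} simple closed geodesic $x$, brackets $Z$ with $x^n$ for \emph{all} positive integers $n$ simultaneously, and then selects the intersection point $p$ (between $x$ and some factor $x_{i_j}$) of \emph{minimal angle} $\theta_p$. Cancellation of the resulting term would force $\langle x^n\ast_p x_{i_j}\rangle=\langle x^n\ast_q x_{k_l}\rangle$ for infinitely many $n$. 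The length formula
\[
\cosh\bigl(l_{x^n\ast_p y}/2\bigr)=\cosh\bigl(nl_x/2\bigr)\cosh\bigl(l_y/2\bigr)+\sinh\bigl(nl_x/2\bigr)\sinh\bigl(l_y/2\bigr)\cos\theta_p
\]
then forces $l_{x_{i_j}}=l_{x_{k_l}}$ and $\theta_p=\theta_q$ (two values of $n$ suffice: Lemma~\ref{lem:orienteql}); a twist-deformation argument forces $\epsilon_p=\epsilon_q$ (Lemma~\ref{lem:signeql}); and a direct analysis of the piecewise-geodesic lifts of $x^n\ast_p y$ to $\H$ shows that if two such classes with distinct $y,z$ coincide, some intersection point has strictly smaller angle (Lemma~\ref{lem:orientextang}), contradicting the minimality of $\theta_p$. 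The one-parameter family $\{x^n\}_n$ is precisely what makes coincidence-elimination tractable: it converts a single topological equality into a family of metric identities from which one can extract $l_y=l_z$ and $\theta_p=\theta_q$.

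Your single-$\alpha$, length-maximality scheme does not obviously give this leverage. Clause~(ii) of your lemma---that $\mu$ arises only from $\gamma^{\ast}$---is exactly where the difficulty sits, and length alone does not prevent a shorter factor $\beta$ meeting $\alpha$ at a more acute angle from producing the same class: by the same formula, $l_{\beta\ast_q\alpha}$ can match or exceed $l_{\gamma^{\ast}\ast_p\alpha}$ when $\theta_q<\theta_p$. So the length-maximality heuristic, without the angle bookkeeping and without varying the test curve in a controlled family, does not close the argument.
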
 

The above theorem naturally extends to the one parameter family $\s_k(\G)$ of Poisson algebras associated to $\s(\G)$ introduced by Turaev in \cite[Section 2.2]{turaev1991skein} (see Section \ref{subsec:S_k}).

\vspace*{2mm}

The skein algebras associated to $\Sigma \times  I$ play a central role in the study of quantum invariants, topological quantum field theory and finite type invariants of links in $3$-manifolds. In a series of papers \cite{MR3480556},  \cite{MR3709650}, \cite{MR3950651}  Bonahon and Wong studied the finite dimensional representations of the Kauffman bracket skein algebra which gave a quantization of the $SL_2(\C)$ character variety. The central elements of  Kauffman bracket skein algebra played a key role in their study and provided ``classical shadow" invariant for the representations. Subsequently the center of the Kauffman bracket skein algebra was studied in \cite{MR3342686}, \cite{MR3885180}.   

 In \cite{turaev1991skein}, Turaev introduced quantization of Poisson algebras of loops in terms of Skein algebras $\ask(\Sigma)$, {\bf A}($\Sigma$), $\kau(\Sigma)$ and their quotients (see Section \ref{sec:orient} and Section \ref{sec:unorient} for details). For oriented curves, the skein relations of these skein algebras are similar to the Jones-Conway skein relations. For unoriented curves, the skein relations are modified version of Kauffman bracket skein relations. 
  In \cite{hoste1990homotopy},  Hoste and Przytycki gave another quantization of Poisson algebra of loops in terms of homotopy skein algebras $ \mathscr{H}\mathscr{S}(\Sigma\times I)$ and $\mathscr{KH}\mathscr{S}(\Sigma\times I)$ (see Section \ref{sec:homskein} for details). 
  Our aim is to compute the centers of these skein algebras.

In that direction, the main results proved in the second part of the paper can be summarized in the following theorem. 

\begin{theorem*}
	The centers of the following skein algebras (over appropriate rings)
	\\$\ask(\Sigma)/\hbar\ask(\Sigma)$,$\ask(\Sigma)/((x-1)\ask(\Sigma)+(h-1)\ask(\Sigma)+\hbar \ask(\Sigma))$,  {\bf A}($\Sigma$)/$\hbar$ {\bf A}($\Sigma$),\\ $\kau(\Sigma)/((x-1)\kau(\Sigma)+h_{-1}\kau(\Sigma) )$, $\kau(\Sigma)/(h_0\kau(\Sigma)+h_{-1}\kau(\Sigma) )$%
\\	 are generated by the empty link, the constant link and the  links whose projection to $\Sigma$ are isotopic to the boundary components or punctures of $\Sigma$.
\end{theorem*}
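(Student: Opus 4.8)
The plan is to reduce the whole statement to the two Poisson‑center computations already available: the first displayed theorem above, for the oriented Poisson algebras $\s(\G)$ and $\U(\G)$ and (by the extension noted right after it) the one‑parameter family $\s_k(\G)$, and the author's earlier work with Chas for the unoriented Poisson algebras $\s(\GW)$ and $\U(\GW)$. The bridge is Turaev's theorem that the skein algebras $\ask(\Sigma)$, $\mathbf{A}(\Sigma)$, $\kau(\Sigma)$ are \emph{quantizations} of these Poisson algebras. Concretely, I would first write down, for each of the five quotients in the statement, the resulting Poisson‑algebra isomorphism: setting $\hbar=0$ makes each skein algebra commutative (or, for the $\U$‑type, leaves the Goldman bracket as its commutator) with an induced Poisson bracket equal to the $\hbar$‑linear part of the skein commutator, and Turaev's identification then gives $\ask(\Sigma)/\hbar\ask(\Sigma)\cong\s_k(\G)$ over $K[x^{\pm1},h^{\pm1}]$; imposing in addition $x=1$ and $h=1$ collapses the parameter and yields $\ask(\Sigma)/((x-1)\ask(\Sigma)+(h-1)\ask(\Sigma)+\hbar\ask(\Sigma))\cong\s(\G)$; the oriented algebra $\mathbf{A}(\Sigma)/\hbar\mathbf{A}(\Sigma)$ is Turaev's quantization of $\U(\G)$; and the two Kauffman‑type quotients $\kau(\Sigma)/((x-1)\kau(\Sigma)+h_{-1}\kau(\Sigma))$ and $\kau(\Sigma)/(h_0\kau(\Sigma)+h_{-1}\kau(\Sigma))$ are, respectively, Turaev's quantizations of $\s(\GW)$ and $\U(\GW)$. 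Under each of these isomorphisms the empty link goes to $1$, the constant (trivial) link to the free homotopy class of the constant loop, and a link projecting to a boundary component (resp. to a small loop around a puncture) to the corresponding class in $\pi$ or $\tpi$.

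With this dictionary in hand the theorem is a transport‑of‑structure argument. The inclusion ``$\supseteq$'' is the easy half: the empty and constant links are obviously central, and a link isotopic to a boundary component (resp. encircling a puncture) can be pushed into a collar $\partial\Sigma\times I$ (resp. into a punctured‑disk neighborhood) and hence isotoped off every other link in $\Sigma\times I$, so it is Casimir — on the Poisson side this is just the vanishing of Goldman's intersection sum for a class with a representative disjoint from everything. For the reverse inclusion ``$\subseteq$'' I would apply the first displayed theorem (together with its $\s_k(\G)$ extension) to the $\ask$‑ and $\mathbf{A}$‑quotients, and \cite{chas2020lie} to the $\kau$‑quotients: the Poisson center of each target is generated exactly by the constant curve and the curves homotopic to boundaries or punctures, so pulling back along the isomorphism shows the center of each skein quotient is generated by the corresponding links and nothing else. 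Two routine technical points must be checked along the way: base change — since, e.g., $\ask(\Sigma)/\hbar\ask(\Sigma)$ lives over $K[x^{\pm1},h^{\pm1}]$ and not over $K$, one notes the linear equations cutting out the Poisson center are defined over $K$, so the center of $P\otimes_K R$ is $\mathcal{Z}(P)\otimes_K R$ for the relevant $R$; and for the $\U$‑type quotients, which are noncommutative, one checks that the Poisson bracket in question is a scalar multiple of the commutator, so that Poisson center and ordinary center coincide and the same generation statement is exactly what is wanted.

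The place where care is genuinely required — the main obstacle — is pinning down this dictionary: one must match Turaev's normalizations of the variables $x,h,h_0,h_{-1},\hbar$ with the claimed specializations so that each quotient really is isomorphic, as a Poisson algebra, to the named loop algebra, and one must verify that the distinguished ``constant'' and ``empty'' links are sent to the distinguished elements of that algebra rather than to some scalar multiple or combination. The only new \emph{mathematical} input beyond citations is the $\s_k(\G)$‑version of the first theorem, needed for $\ask(\Sigma)/\hbar\ask(\Sigma)$; I would obtain it by rerunning the proof of that theorem and observing that the parameter $k$ only reweights the individual terms of Goldman's bracket and never forces a cancellation that was not already present, so the filtration arguments and the hyperbolic‑geometry detection of nontrivial brackets go through essentially verbatim, the boundary and puncture classes remain Casimir for every $k$, and no new Casimir element appears. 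Granting that, the transport‑of‑structure argument of the previous paragraph closes the proof.
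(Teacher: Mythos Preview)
Your overall strategy---transport the problem through Turaev's isomorphisms to the loop-algebra center theorems---is exactly what the paper does. The issue is that your dictionary is wrong in several places, and the error stems from a false premise.

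You write that ``setting $\hbar=0$ makes each skein algebra commutative'', and on that basis you identify $\ask(\Sigma)/\hbar\ask(\Sigma)\cong\s_k(\G)$ and $\ask(\Sigma)/((x-1)\ask(\Sigma)+(h-1)\ask(\Sigma)+\hbar\ask(\Sigma))\cong\s(\G)$. But with $\hbar=0$ the type-(1) relation $xL_+-x^{-1}L_--hL_0$ still forces $L_1L_2$ and $L_2L_1$ to differ by a nonzero $h$-term, so these quotients are \emph{not} commutative and cannot be the symmetric algebras. The paper instead identifies $\mathbf{A}(\Sigma)/\hbar\mathbf{A}(\Sigma)$ and $\ask(\Sigma)/\hbar\ask(\Sigma)$ (the latter taken over $K[x,h]/(x^2-1)$, not $K[x^{\pm1},h^{\pm1}]$) with the algebra $V_h(\G)$, identifies $\ask(\Sigma)/((x-1)+(h-1)+\hbar)$ with $\U(\G)$, and handles the two Kauffman quotients via $V_{h_1}(\GW)$-type algebras. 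The relevant input on the algebraic side is then Corollary~\ref{cor:V} for $V_h(\G)$ and $V_h(\GW)$, which is obtained from the $\U$-theorems by working over $K[h]$ with the bracket rescaled by $h$. In particular, the $\s_k(\G)$ extension you flag as ``the only new mathematical input'' is not used in this part of the paper at all.

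Because $\s(\G)$, $\U(\G)$, $V_h(\G)$ and their unoriented analogues all turn out to have the same center description, your misidentifications do not alter the final answer; but the argument as you have written it rests on a commutativity claim that is false, so it does not go through until the dictionary is corrected to the noncommutative targets above.
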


As an application of our computation of Poisson centers we also obtain the following result.  

\begin{theorem*}
	The center of the homotopy skein algebra of oriented (respectively unoriented) links is generated by the empty link, the constant link and the oriented (respectively unoriented) links which are link homotopic to the boundary components or punctures of $\Sigma$.
\end{theorem*}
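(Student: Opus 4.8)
The plan is to exploit the fact that the homotopy skein algebra is a flat deformation, over $\Z[z]$ (with $z$ the skein variable), of the Poisson algebra $\s(\G)$. Concretely I would use the two structural facts recalled in Section~\ref{sec:homskein} from Hoste--Przytycki \cite{hoste1990homotopy}: that $\mathscr{H}\mathscr{S}(\Sigma\times I)$ is a \emph{free} $\Z[z]$-module on the link homotopy classes of oriented links, and that the specialisation $z\mapsto 0$ (which makes every crossing change trivial) identifies $\mathscr{H}\mathscr{S}(\Sigma\times I)/z\,\mathscr{H}\mathscr{S}(\Sigma\times I)$ with the commutative algebra $\s(\G)$, the induced bracket $\tfrac1z[\,\cdot\,,\cdot\,]\bmod z$ being the Goldman bracket. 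The unoriented case $\mathscr{KH}\mathscr{S}(\Sigma\times I)$ would be treated identically, with $\s(\GW)$ in place of $\s(\G)$ and the Chas--Kabiraj computation of $\mathcal{Z}(\s(\GW))$ from \cite{chas2020lie} in place of the first main theorem. Throughout, let $\mathcal{P}$ denote the $\Z[z]$-subalgebra of $\mathscr{H}\mathscr{S}(\Sigma\times I)$ generated by the empty link, the constant link and the peripheral links (those whose projection is isotopic to a boundary component or puncture of $\Sigma$).

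First I would verify the easy inclusion $\mathcal{P}\subseteq\mathcal{Z}(\mathscr{H}\mathscr{S}(\Sigma\times I))$: the empty link is the unit, and a constant or peripheral link can be isotoped in $\Sigma\times I$ into a small ball, a collar of the relevant boundary component, or a neighbourhood of the relevant puncture, hence to lie over a region of $\Sigma$ disjoint from the projection of any prescribed link; since two links that lie over disjoint regions of $\Sigma$ commute under stacking, each generator of $\mathcal{P}$ is central, and therefore so is $\mathcal{P}$. I would also record that a product of such generators is the corresponding disjoint-union link, which is one of the basis links, and that under $\mathscr{H}\mathscr{S}(\Sigma\times I)/z\cong\s(\G)$ these disjoint-union links correspond precisely to the monomials in the constant and peripheral classes of $K\pi$ — so $\mathcal{P}$ is spanned over $\Z[z]$ by $z$-free basis links.

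The heart is the reverse inclusion, which I would prove by induction on the $z$-degree of a central element $c$. From $[b,c]=0$ for all $b$ together with surjectivity of $\mathscr{H}\mathscr{S}(\Sigma\times I)\to\s(\G)$ one obtains that $\bar c\in\s(\G)$ is Poisson-central, so by the first main theorem $\bar c$ lies in the subalgebra of $\s(\G)$ generated by the constant and peripheral classes; lifting each monomial occurring in $\bar c$ to the corresponding disjoint-union link produces $p\in\mathcal{P}$, a $\Z$-linear combination of basis links (hence of $z$-degree $\le0$), with $\bar p=\bar c$. Then $c-p$ is central and lies in $z\,\mathscr{H}\mathscr{S}(\Sigma\times I)$; using that $\mathscr{H}\mathscr{S}(\Sigma\times I)$ is $z$-torsion-free and that $z$ is central, write $c-p=z\,c'$ with $c'$ central. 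If $\deg_z c\le0$ then $c=\bar c=p\in\mathcal{P}$; otherwise $\deg_z c'=\deg_z c-1$, so $c'\in\mathcal{P}$ by induction and $c=p+z\,c'\in\mathcal{P}$. This gives $\mathcal{Z}(\mathscr{H}\mathscr{S}(\Sigma\times I))=\mathcal{P}$, and likewise for $\mathscr{KH}\mathscr{S}(\Sigma\times I)$.

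I expect the main obstacle to be pinning down the two structural inputs about $\mathscr{H}\mathscr{S}(\Sigma\times I)$ — freeness over $\Z[z]$, and the precise identification of its $z=0$ fibre with $(\s(\G),\text{Goldman})$ — from Hoste--Przytycki's basis theorem together with the skein relation, and the observation that a Poisson-central class can be lifted \emph{$z$-freely} (via disjoint-union links), which is exactly what makes the $z$-degree induction terminate. The genuinely geometric question of \emph{which} loops are Poisson-central is entirely delegated to the first main theorem and to \cite{chas2020lie}, so the remaining part is a formal ``lift through a flat deformation'' argument whose only delicate point is a choice of conventions (framings; the ground ring). If $\mathscr{H}\mathscr{S}(\Sigma\times I)$ is instead taken over $\Z[z^{\pm1}]$, I would replace the $z$-degree induction by the filtration of $\mathscr{H}\mathscr{S}(\Sigma\times I)$ by the number of components, whose associated graded is the relevant polynomial algebra and on which the same lifting argument runs.
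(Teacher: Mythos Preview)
Your argument is correct, and the structural inputs you rely on (freeness of $\mathscr{H}\mathscr{S}(\Sigma\times I)$ over $\Z[z]$, and the identification of its $z=0$ fibre with $\s(\G)$ carrying the Goldman Poisson bracket) do indeed follow from Hoste--Przytycki's work. But the paper takes a shorter and more direct route: \cite[Theorem~3.3 and p.~484]{hoste1990homotopy} already gives an isomorphism of \emph{associative algebras} $\mathscr{H}\mathscr{S}(\Sigma\times I)\cong \U(\G_a)$ and $\mathscr{KH}\mathscr{S}(\Sigma\times I)\cong \U(\GW_a)$, where $\G_a$ carries the bracket $a[\cdot,\cdot]$ over $\Z[a]$; the center is then read off immediately from Theorem~\ref{thm:univorient} (in its $V_h$ form, Corollary~\ref{cor:V}) and Theorem~\ref{thm:univunorient}, with no deformation--lifting induction needed. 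What your approach buys is portability: the same template applies to any $z$-torsion-free quantization whose semiclassical limit has a known Poisson center, without needing an explicit identification of the deformed algebra itself. What the paper's approach buys is brevity --- once the isomorphism with $\U(\G_a)$ is available the proof is a one-line citation --- and it sidesteps the bookkeeping your induction requires (tracking $z$-degree, and checking that monomials in peripheral classes lift $z$-freely to disjoint-union basis links).
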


We further exploit the relation between these Poisson algebras and homotopy skein algebras to obtain various properties of these skein algebras. 

\subsection{Organization of the paper} In Section \ref{sec:prel}, we recall the definitions and properties of the Poisson algebras of loops introduced in \cite{goldman_invariant_1986} and related Poisson algebras introduced in \cite{turaev1991skein} and \cite{hoste1990homotopy}. In Section \ref{sec:hyp}, we recall some basic facts about  hyperbolic surfaces. In Section \ref{sec:lift} we describe the geometry of the lifts of the terms of various Lie brackets of these Poisson algebras. In Section \ref{sec:tech}  we prove the main technical lemmas required for our proof. In Section \ref{sec:univ} we compute the center of the Poisson algebras of loops. In Section \ref{sec:orient},\ref{sec:unorient} and \ref{sec:homskein} we define the various skein algebras and compute their centers. In Section \ref{sec:spec} we explain a possible generalization of our theorems.  
\subsection*{Acknowledgment}  The author would like to thank Prof. Moira Chas for her encouragement and enlightening conversations. 
   
\section{Poisson algebras of loops}\label{sec:prel}
Let $\Sigma$ be a hyperbolic surface. 
We consider both oriented and unoriented closed curves on $\Sigma$. Given an oriented curve $x$ we denote the corresponding unoriented curve by $\x$. 

 We fix an orientation on $\Sigma$ once and for all. 
 All the pictures are drawn assuming the anti-clockwise orientation of the surface.

There is a one-to-one correspondence between the set of all free homotopy classes of oriented closed curves in $\Sigma$ and the set of all conjugacy classes in $\pi_1(\Sigma)$, both of which will be denoted by $\pi$. The set of all free homotopy classes of unoriented closed curves will be denoted by ${\tpi}$.
 
 We denote the free homotopy class of an oriented (respectively unoriented) closed curve $x$ (respectively $\x$) by $\la x\ra$ (respectively $\la \x\ra$).

We start with the definition of Goldman Lie algebra and Thurston-Wolpert-Goldman Lie algebra.

\begin{definition}
	Consider two oriented closed curves $x$ and $y$ in $\Sigma$ which intersect each other transversally in double points. Define the \emph{Goldman bracket} between $\la x\ra$ and $\la y\ra$ by 
	$$[\la x\ra,\la y\ra]=\sum_{p\in x\cap y}\epsilon_p\la x*_py\ra,$$
	where $x\cap y$ denotes the set of all intersection points between $x$ and $y$, $\epsilon_p$ denotes the sign of the intersection between $x$ and $y$ at $p$ and  $x*_py$ is the loop product of $x$ and $y$ at $p$. Extend the Goldman bracket to $K\pi$ linearly.

\end{definition}

Goldman \cite{goldman_invariant_1986} proved that the bracket is well defined on $K\pi$, is antisymmetric and satisfy Jacobi identity. Hence $K\pi$ with the Goldman bracket forms a Lie algebra called the \emph{Goldman Lie algebra} and it is denoted by $\G$. For convenience of notation we denote $[\la x \ra,\la y\ra]$ simply by $[x,y]$. 

Consider the involution $i:\pi\rightarrow \pi$ defined by $i(x)=x^{-1}$ where $x^{-1}$ denotes the same curve $x$ but with opposite orientation. The map $i$ extends linearly to a Lie algebra automorphism of $\G$. Therefore the stationary set $\{x\in K\pi:i(x)=x\}$ is a Lie subalgebra of $\G$. The stationary set is canonically isomorphic to $K\tpi$. We call this Lie subalgebra the \emph{Thurston-Wolpert-Goldman Lie algebra} and denote it by $\GW.$

 For an intrinsic definition of Thurston-Wolpert-Goldman Lie algebra see \cite{chas2020lie}. For convenience of notation we denote $[\la \x \ra,\la \y\ra]$ simply by $[\x,\y]$.

Let $\U(\G)$ be the universal enveloping algebra and $\s(\G)$ be the symmetric algebra of $\G$. Similarly denote the universal enveloping algebra and symmetric algebra of $\GW$ by  $\U(\GW)$ and $\s(\GW)$ respectively.
For definition and basic properties of universal enveloping algebra and symmetric algebra  see \cite{abe2004hopf}, \cite{hoste1990homotopy}.

A \emph{Poisson algebra} over $K$ is an associative $K$-algebra (possibly non-commutative) together with a Lie bracket that also satisfies Leibniz's rule. The algebras $\U(\G)$ and $\U(\GW)$ have natural Poisson algebra structures with the commutator being the Lie bracket. We extend the Lie bracket of $\G$ and $\GW$ - using Leibniz rule - to $\s(\G)$ and $\s(\GW)$ respectively. This make $\s(\G)$ and $\s(\GW)$ Poisson algebras. 

There are canonical maps from $\G$ and $\GW$ to their corresponding universal enveloping algebra and symmetric algebra. To simplify notation, we denote an element and its image under these maps by the same notation. We also denote the product of two elements  in  these Poisson algebras simply by juxtaposing the elements.

Let us recall the Poincare-Birkhoff-Witt theorem for $\G$ and $\GW$ (see \cite{abe2004hopf}, \cite[Theorem 3.2]{hoste1990homotopy}). We state it for $\G$. The statement for $\GW$ is exactly the same after you replace the oriented curves with their corresponding unoriented representatives.
\begin{theorem}\label{thm:pbw}
	Let $\leq $ be a fixed total order on ${\pi}$. Consider the set
	$$S=\{{x}_1 {x}_2\cdots  {x}_n:{x}_i\in{\pi}, i\in\{1,2,\ldots ,n\},  {x}_1\leq {x}_2\leq \cdots  \leq {x}_n\}.$$ 
	Both  $\U(\G)$  and $\s(\G)$ are freely generated by $S$ as $K$ modules. Moreover the natural maps from ${\G}$ into  $\U(\G)$  and $\s(\G)$ are injective Lie algebra homomorphisms. 
\end{theorem}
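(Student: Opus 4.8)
The plan is to reduce this statement to the classical Poincar\'e--Birkhoff--Witt theorem over a commutative ground ring, exactly as recorded in \cite{abe2004hopf} and \cite[Theorem 3.2]{hoste1990homotopy}. That theorem says: if $L$ is a Lie algebra over a commutative ring $K$ which is \emph{free} as a $K$-module, and $B$ is a totally ordered $K$-basis of $L$, then $\U(L)$ is free as a $K$-module with basis the set of ordered monomials $b_1 b_2\cdots b_n$ (with $b_i\in B$, $b_1\leq\cdots\leq b_n$, the empty monomial $1$ included), and the canonical map $L\to\U(L)$ is injective. So the whole task is to verify the hypotheses and then transcribe the conclusion, plus a short separate remark for $\s(\G)$.

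First I would recall, from Goldman's results quoted above, that $\G=K\pi$ with the Goldman bracket is a genuine Lie algebra over $K$ (antisymmetry and the Jacobi identity both hold), and that by construction $K\pi$ is free as a $K$-module with the set $\pi$ of free homotopy classes of oriented closed curves as a basis. Since $\Sigma$ is of finite type, $\pi_1(\Sigma)$ is finitely generated, hence countable, so $\pi$ is countable and in particular admits a total order $\leq$; fix one. (Alternatively one simply invokes that any set can be totally ordered.) Applying the classical PBW theorem with $B=\pi$ then gives directly that $\U(\G)$ is free over $K$ on the set $S$ and that $\G\hookrightarrow\U(\G)$. That this inclusion is a Lie algebra homomorphism is immediate: by construction of the enveloping algebra the Goldman bracket on $\G$ is sent to the commutator in $\U(\G)$.

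For $\s(\G)$ I would note that, after forgetting the Lie/Poisson structure, $\s(\G)$ is just the symmetric $K$-algebra on the free $K$-module $K\pi$, i.e.\ the polynomial algebra over $K$ in the variables $\pi$; its standard monomial basis is precisely $S$, and the inclusion $\G=K\pi\hookrightarrow\s(\G)$ as the degree-one part is manifestly injective. That this inclusion is a Lie homomorphism is built into the definition, since the Poisson bracket on $\s(\G)$ was \emph{defined} by extending the Goldman bracket via Leibniz's rule and therefore restricts to it on $\G$. Finally, the statement for $\GW$ follows by the identical argument with $\tpi$ in place of $\pi$ and unoriented representatives in place of oriented ones.

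There is essentially no serious obstacle: the only point requiring attention is the freeness hypothesis of the classical PBW theorem, and this is automatic here because $\G$ and $\GW$ are by definition the free $K$-modules on $\pi$ and $\tpi$. The mild bookkeeping issue is merely to fix a total order on $\pi$ (resp.\ $\tpi$) so that the set $S$ is well defined, which I would dispatch with the countability remark above.
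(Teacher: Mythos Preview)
Your proposal is correct and aligns with the paper's treatment: the paper does not prove this theorem at all but simply recalls it with citations to \cite{abe2004hopf} and \cite[Theorem 3.2]{hoste1990homotopy}, and your argument is exactly the verification that the hypotheses of the classical PBW theorem (freeness of $\G=K\pi$ as a $K$-module with basis $\pi$) are met. Your separate treatment of $\s(\G)$ as the polynomial algebra on $\pi$ and the remark on the Lie-homomorphism property are accurate elaborations of what the cited references contain.
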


\subsection{The algebras $V_h(\G)$ and $V_h(\GW)$}\label{subsec:V} Let $\gl$ be any Lie algebra over $K$. We recall the description of the algebra $V_h(\gl)$ and its one parameter variants introduced by Turaev in \cite[Section 1.3]{turaev1991skein}. 

 Let $\al$ be the $K[h]$-module $K[h]\otimes \gl$ and $T(\al)$ be the tensor algebra of $\al$. The algebra  $V_h(\gl)$ is defined to be the quotient of $T(\al)$  by the two sided ideal generated by the relations $\{xy-yx-h[x,y]:x,y\in  \gl \subset \al\}$. Therefore $V_h(\gl)$ is an an associative $K[h]$ algebra generated by $\gl$.

 Consider the Lie algebras $K[h]\otimes \G$ and $K[h]\otimes \GW$ over $K[h]$, where we replace the Lie brackets by $h$ times the Goldman Lie bracket and Thurston-Wolpert-Goldman Lie bracket respectively. By {\cite[Remarks 2, Section 4.6]{turaev1991skein}}, $V_h(\G)$ and $V_h(\GW)$ are isomorphic to $\U(K[h]\otimes \G)$ and $\U(K[h]\otimes \GW)$ respectively.  Therefore $V_h(\G)$ (respectively $V_h(\GW)$) is freely generated by the set $S$ (respectively the same set $S$ with oriented curves replaced by unoriented curves) in Theorem \ref{thm:pbw} as a $K[h]$ module.
 
 Turaev also considered the one parameter family of $K$-algebras $V_h^k(\gl)=\{V_h(\gl)/(h-k)V\}$ for any $k\in K$. When $K$ is a field and $k\neq 0,$ each $V_h^k(\gl)$ is isomorphic to the algebra $\U(\gl).$ 
   
\subsection{A family $\s_k(\G)$ of deformations of $\s(\G)$}\label{subsec:S_k}
 Given any $k\in K$, we define $[x,y]_k$ by $$[x,y]_k=[x,y]-k(x \cdot y)xy$$ for all $x,y \in \pi$, where $(x\cdot y)$ is the algebraic intersection number between $x$ and $y$. Extend $[x,y]_k$ to $\G$ using bi-linearity and  to $\s(\G)$ using Leibniz rule.  Then $[\,\,,]\,_k$ satisfy the Jacobi identity (see \cite[Section 2.2]{turaev1991skein}).
We define the Poisson algebra $\s_k(\G)$ over $K$ to be the $K$-algebra $\s(\G)$ with the Lie  bracket $[\,\,,]\,_k$. Observe that $\s_0(\G)=\s(\G)$.
\section{Hyperbolic geometry on surface.} \label{sec:hyp}
 
In this section, we recall some basic results about hyperbolic geometry on surfaces. For the proofs of the results see \cite{beardon2012geometry}, \cite{MR2850125} and the references therein. 

Let $\Sigma$ be an oriented surface of genus $g$ with $b$ boundary components and $n$ punctures. We assume the Euler characteristic of $\Sigma$ to be negative, i.e. $2-2g-b-n<0$.  Every such surface admits a hyperbolic metric. We call an oriented surface with a given hyperbolic metric a \emph{hyperbolic surface}.

Given any hyperbolic surface, we identify its universal cover with the hyperbolic plane $\H$ and its fundamental group $\pi_1(\Sigma)$ with a discrete subgroup of $PSL_2(\R)$,  the group of orientation preserving isometries of $\H$.

A closed curve is called \emph{essential} if it is not homotopic to a point or to a puncture. It is called \emph{peripheral} if it is homotopic to a puncture. 


Given any essential oriented (respectively unoriented) closed curve $x$ in a hyperbolic surface, there is a unique oriented (respectively unoriented) geodesic in $\la x\ra$. Unless otherwise specified, we use the geodesic representatives from the corresponding free homotopy classes.

Recall that the Teichm\"{u}ller space $\T$ of $\Sigma$ is the set of all  isotopy classes of marked hyperbolic surfaces $(F,\phi)$ where $F$ is a hyperbolic surface and $\phi:\Sigma\rightarrow F$ is an orientation preserving diffeomorphism. Given any point $(F,\phi)\in \T$, we associate a natural hyperbolic metric on $\Sigma$, the pull back of the metric on $F$ by $\phi$. We use this association to identify a point $(F,\phi)\in\T$ as a hyperbolic metric on $\Sigma$. 
For ease of notation we sometimes denote $(F,\phi)$ simply by $F$. 
Given any free homotopy class $\la x\ra$  and any point $F\in \T$, let $l_x(F)$ be the \emph{length function}, i.e., the length of the unique geodesic in $\la x\ra$ in $\Sigma$ corresponding to the hyperbolic metric $F$. We call a geodesic in the metric $F$ an \emph{$F$-geodesic}.  

Let $F\in\T$ be a hyperbolic metric on $\Sigma$ and $x,y$ be two oriented  $F$-geodesics intersecting transversally at an intersection point $p$. 
We define $\theta_p(F)$  to be the angle at $p$ between the positive direction of $x$ and $y$.
To keep notations simple, we sometimes omit $F$ from the notations of angle and length. The meaning should be clear from the context.

Given two closed curves $x$ and $y$, the \emph{geometric intersection number} $i(x,y)$ between $x$ and $y$ is defined to be $i(x,y)=min\{a\cap b:a\in \la x \ra,b\in \la y\ra\}. $  
The following lemma is a  well known result.  
\begin{lemma}\label{lem:classical}
	Let $\Sigma$ be an orientable surface 
	and  $y$ be a closed curve on $\Sigma$ such that $i(x,y)=0$ for every simple closed curve $x$. Then $y$ is either homotopically trivial or homotopic to a boundary curve or homotopic to a puncture. 	
\end{lemma}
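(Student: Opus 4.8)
The plan is to prove Lemma~\ref{lem:classical} by establishing the contrapositive: if $y$ is essential and not homotopic to a boundary curve or puncture, then there exists a simple closed curve $x$ with $i(x,y) \neq 0$. The hypothesis ``$y$ essential, non-peripheral, and not boundary-parallel'' is exactly the condition that $y$ is a \emph{filling-obstructing} or rather a \emph{genuinely interior} curve in the sense of mapping class group theory, so the strategy is to produce an explicit transverse simple closed curve.

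First I would reduce to the case where $y$ is a primitive class, i.e.\ not a proper power of another class: replacing $y$ by its underlying primitive loop does not change which simple closed curves it meets. Next I would pass to the geodesic representative of $y$ in some fixed hyperbolic metric $F \in \T$ (using the discussion in Section~\ref{sec:hyp}); since $y$ is essential, this geodesic exists and is unique. If the geodesic $y$ is itself simple, then I can take $x$ to be a simple closed curve dual to $y$: because $y$ is neither separating-off-a-disk-or-puncture nor boundary-parallel, cutting $\Sigma$ along $y$ yields a surface that is still connected or has nontrivial topology, and in either case one can find a simple arc or loop crossing $y$ once or twice, which closes up (or already is) to a simple closed curve $x$ with $i(x,y) \neq 0$. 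This is the standard change-of-coordinates principle for simple closed curves on surfaces (cf.\ Farb--Margalit's \emph{Primer on Mapping Class Groups}, Ch.~1).

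If the geodesic $y$ is not simple, then it has self-intersections, and here I would invoke the fact that a primitive non-simple geodesic on a hyperbolic surface cannot be disjoint from all simple closed curves; equivalently, the complement of the ``curves disjoint from $y$'' in the curve complex is nonempty. Concretely: take a self-intersection point $p$ of $y$ and perform a small smoothing/surgery to extract an embedded subloop $\delta$ of $y$ that is a simple closed curve. This $\delta$ is either trivial, peripheral, boundary-parallel — in which case I iterate or choose a different smoothing — or it is an honest essential simple closed curve. One shows that at least one such embedded subloop $\delta$ is essential and crosses the rest of $y$, so $i(\delta, y) \neq 0$ (a subloop cannot be made disjoint from the remaining strands of $y$ through $p$ without undoing the self-intersection, which contradicts $y$ being a geodesic). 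Thus $x = \delta$ works.

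The main obstacle I expect is the non-simple case: carefully arguing that \emph{some} embedded subloop obtained from smoothing a self-intersection is simultaneously (a) essential, (b) not boundary-parallel, and (c) genuinely transverse to $y$ (so that $i$ is positive, not just that the geometric intersection of the realized representatives is positive). Handling (c) requires the geodesic/minimal-position property — two geodesics realize their geometric intersection number — so one must either keep everything geodesic or argue via bigon-free position. If a short direct argument is awkward, the cleanest route is to cite the known statement that the ``reduced'' or ``essential'' curves on $\Sigma$ are detected by their geometric intersection with simple closed curves (a consequence of the density of simple closed curves in the space of measured laminations, or of the fact that $i(\cdot,\cdot) = 0$ with all simple closed curves forces a lamination to be supported on peripheral/boundary curves), and thereby treat this lemma as genuinely ``well known'' with a pointer to \cite{beardon2012geometry} or \cite{MR2850125} together with the mapping-class-group literature.
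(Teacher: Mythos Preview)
The paper does not prove this lemma: it is simply labelled ``a well known result,'' with the general references at the start of Section~\ref{sec:hyp} serving as an implicit pointer. Your proposal therefore already attempts more than the paper does, and your closing fallback---treat the statement as folklore and cite the literature---is exactly what the paper itself settles for.

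On the substance of your sketch, the simple case is fine and standard. The non-simple case has a genuine gap. Smoothing a single self-intersection of a geodesic $y$ that has many crossings does not produce an embedded loop, and even an innermost subloop can be peripheral (on a pair of pants, both lobes of the figure-eight are boundary-parallel). More seriously, even when you do extract an essential simple $\delta$ from $y$, your justification that $i(\delta,y)>0$ because ``the remaining strands of $y$ pass through $p$'' is a statement about one particular pair of representatives, not about the homotopy-invariant quantity $i(\delta,y)$; you have given no bigon or minimal-position argument ruling out that $\delta$ and $y$ can be homotoped apart. The standard clean route is instead to take the subsurface $S(y)\subset\Sigma$ filled by $y$ (regular neighbourhood with complementary disks, once-punctured disks and boundary-parallel annuli filled in) and use that any essential non-peripheral simple closed curve contained in $S(y)$ must meet $y$. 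Note also that the lemma as literally stated already fails when $\Sigma$ is a thrice-punctured sphere: every simple closed curve there is peripheral, yet the figure-eight is neither trivial nor peripheral. Any honest proof has to confront this edge case, which is another reason the surgery heuristic cannot succeed as written.
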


\section{Lift of the loop products to $\H$}\label{sec:lift}
In this section we discuss the geometry of loop product described in the definition of Goldman Lie bracket. For details see \cite{chas2016extended}, \cite{kabiraj2016center}.

We fix a metric $F\in\T$. We discuss all geometric objects, lengths and angles with respect to $F$ without mentioning it explicitly. We identify the universal cover of $\Sigma$ (with metric $F$) to $\H$. Unless otherwise mentioned, in this section, we assume all geodesics to be oriented.  

Let $x$ and $y$ be two oriented geodesics in $\Sigma$ and $p$ be a transverse intersection double point between them. We call any oriented lift of a geodesic $x$ to $\H$ an \emph{axis} of $x$. We use the notation $A_x$ for a generic axis of $x$ and $A_x(P)$ for an axis of $x$ passing through a point $P\in \H$.
Our starting point is the following result to find an axis and the length of the geodesic in $\la x*_py\ra $. 

Let $P$ be any lift of $p$ to $\H$. There exist two lifts $A_x(P)$ and $A_y(P)$ of $x$ and $y$ respectively, intersecting at $P$ (see Figure \ref{axis}). Let $R$ be the point on $A_x(P)$ at a distance $l_x/2$ from $P$ in the forward direction of $A_x(P)$ and $S$ be the point on $A_y(P)$ at a distance $l_y/2$ from $P$ in the backward direction of $A_y(P)$.

\begin{figure}[h]
	\centering
	\includegraphics[trim = 30mm 25mm 50mm 10mm, clip, width=10cm]{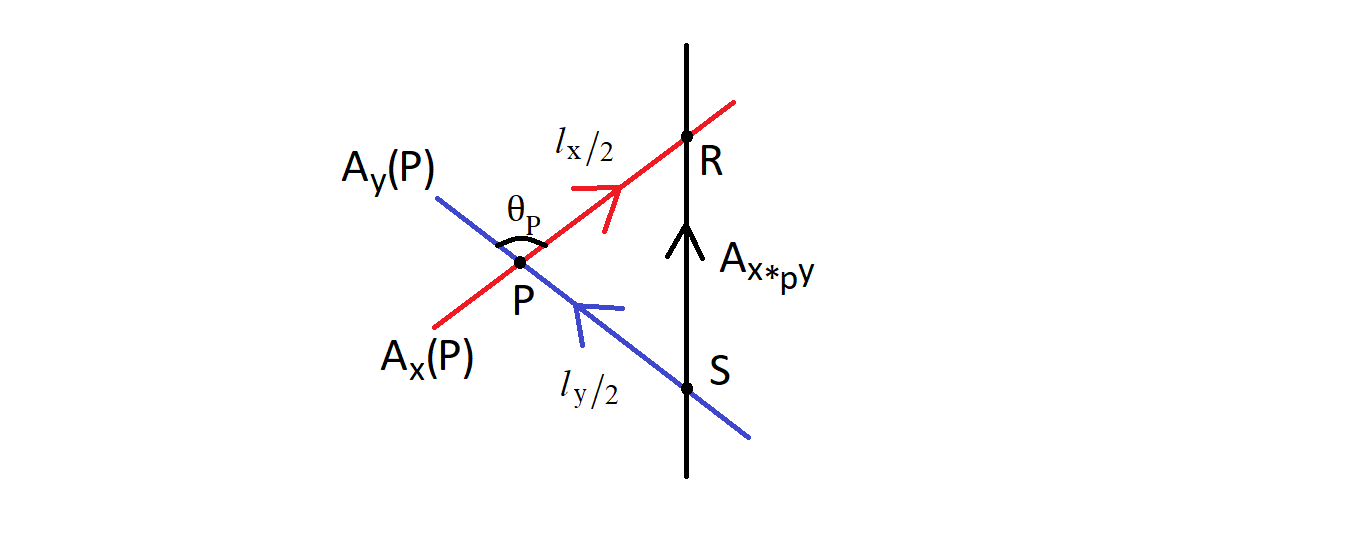}
	\caption{~}\label{axis}
\end{figure}

\begin{theorem}{\cite[Theorem 7.38.6]{beardon2012geometry}} \label{thm:beardon}
With the above notation, the geodesic containing the geodesic segment from $S$ to $R$ (with orientation from $S$ to $R$) is an axis of the geodesic in  $\la x*_py\ra$. We also have 
$$\cosh\left(\frac{l_{x*_py}}{2}\right)=
\cosh\left(\frac{l_x}{2}\right)\cosh\left(\frac{l_y}{2}\right)+
\sinh\left(\frac{l_x}{2}\right)\sinh\left(\frac{l_y}{2}\right)\cos\theta_p.$$
\end{theorem}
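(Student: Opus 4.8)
The plan is to lift the whole picture to $\H$, pass from curves to the deck group $\Gamma<PSL_2(\R)$, and reduce everything to a $2\times 2$ matrix computation via the classical dictionary relating a hyperbolic isometry to its trace and to its axis. Concretely: the loop $x$ based at $p$ is represented by a deck transformation $g\in\Gamma$; since $P$ is a lift of a point of the closed geodesic $x$ and $g$ preserves the lift of $x$ through $P$ (mapping $P$ to the point at arclength $l_x$ along it), $g$ is hyperbolic, its axis is exactly $A_x(P)$, its translation length is $l_x$, and it moves $P$ in the forward direction. Likewise $y\rightsquigarrow h$, hyperbolic with axis $A_y(P)$ and length $l_y$. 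The free homotopy class $\la x*_py\ra$ is the conjugacy class of $gh$, so the task is to compute the translation length of $gh$ and exhibit an axis of it.

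The technical core I would isolate as a lemma in normal form: if $\gamma_1,\gamma_2\in PSL_2(\R)$ are hyperbolic with a common point $Q$ lying on both their axes and $\gamma_1\gamma_2$ is hyperbolic, then $|\mathrm{tr}(\gamma_1\gamma_2)|=2\bigl(\cosh\tfrac{\ell_1}{2}\cosh\tfrac{\ell_2}{2}+\cos\phi\,\sinh\tfrac{\ell_1}{2}\sinh\tfrac{\ell_2}{2}\bigr)$, where $\ell_i$ is the translation length of $\gamma_i$ and $\phi$ is the angle at $Q$ between the forward directions of the two axes, and moreover the axis of $\gamma_1\gamma_2$ passes through the midpoint of the segment $[Q,\gamma_1 Q]$. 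To prove it, conjugate so that $Q=i$ and the axis of $\gamma_1$ is the imaginary axis oriented upward, hence $\gamma_1=\mathrm{diag}(e^{\ell_1/2},e^{-\ell_1/2})$ and $\gamma_2=\rho\,\mathrm{diag}(e^{\ell_2/2},e^{-\ell_2/2})\,\rho^{-1}$ with $\rho$ the rotation of $\H$ about $i$ by $\phi$; multiplying out and expanding $\cosh(\tfrac{\ell_1}{2}\pm\tfrac{\ell_2}{2})$ gives the trace formula, and solving the fixed-point equation $Cz^2+(D-A)z-B=0$ of $\gamma_1\gamma_2=\left(\begin{smallmatrix}A&B\\C&D\end{smallmatrix}\right)$ shows the product of the two (real, opposite-sign) fixed points equals $-e^{\ell_1}$; since the geodesic joining real points $z_1,z_2$ of opposite sign passes through $e^{\ell_1/2}i$ precisely when $z_1z_2=-e^{\ell_1}$, the axis meets the midpoint $e^{\ell_1/2}i$ of $[i,\gamma_1 i]$.

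Now apply the lemma with $(\gamma_1,\gamma_2,Q)=(g,h,P)$: since $\cosh\tfrac{l_x}{2}\cosh\tfrac{l_y}{2}+\cos\theta_p\sinh\tfrac{l_x}{2}\sinh\tfrac{l_y}{2}\ge\cosh\bigl(\tfrac{l_x}{2}-\tfrac{l_y}{2}\bigr)>1$, the element $gh$ is hyperbolic, $2\cosh\tfrac{l_{x*_py}}{2}=|\mathrm{tr}(gh)|$ yields the stated length identity, and its axis passes through the midpoint $R$ of $[P,gP]$. Applying the lemma again with $(\gamma_1,\gamma_2,Q)=(h^{-1},g^{-1},P)$ and using that $(gh)^{-1}=h^{-1}g^{-1}$ has the same unoriented axis as $gh$, that axis also passes through the midpoint of $[P,h^{-1}P]$, which is $S$; hence the axis is the geodesic through $R$ and $S$. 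For the orientation, note that $gh$ is hyperbolic for every $\theta_p\in(0,\pi)$, so its translation direction along this axis is constant under deformation; specializing to the degenerate configuration in which the two axes coincide with matching orientations, where $gh$ becomes the forward translation along $A_x(P)$ by $l_x+l_y$ and $R,S$ lie on that line with $S$ behind $R$, pins the direction as from $S$ to $R$.

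The main obstacle is purely bookkeeping rather than conceptual: getting the half-lengths $l_x/2,l_y/2$ and the angle (it is really the angle $\pi-\theta_p$ of the triangle $PRS$ at $P$ that enters the hyperbolic law of cosines, and this is exactly what turns a $-\cos$ into the $+\cos\theta_p$ of the statement) tracked correctly through the matrix product. The one step that is not mechanical is noticing that the single computation behind the normal-form lemma can be invoked a second time, for $h^{-1}g^{-1}$, to produce the point $S$ symmetrically; the remaining orientation issues — that $A_x(P)$ genuinely is the axis of the relevant conjugate of the deck transformation, and the forward/backward signs — are settled by reading directions off Figure \ref{axis}.
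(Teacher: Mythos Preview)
Your argument is correct. Note, however, that the paper does not supply its own proof of this theorem: it is quoted verbatim from Beardon's book and used as a black box, so there is no in-paper argument to compare against.

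It is still worth contrasting your route with Beardon's. Beardon's proof is synthetic: he writes each hyperbolic isometry as a product of two half-turns (point reflections) about points on its axis a half translation length apart, so that $g=\sigma_R\sigma_P$ and $h=\sigma_P\sigma_S$ share the half-turn at $P$; then $gh=\sigma_R\sigma_S$ is visibly hyperbolic with axis the geodesic through $R$ and $S$ and translation length $2\,d(R,S)$, and the hyperbolic law of cosines in the triangle $PRS$ (with angle $\pi-\theta_p$ at $P$) yields the displayed formula. Your approach instead normalizes in $PSL_2(\R)$ and computes the trace and the fixed-point product of $gh$ directly. Both give the same endpoint; the half-turn decomposition makes the axis statement and the factor-of-two in the half-lengths transparent without computation, while your matrix argument is more mechanical and generalizes readily to other trace identities. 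One small quibble: in your hyperbolicity check you wrote $\ge\cosh\bigl(\tfrac{l_x}{2}-\tfrac{l_y}{2}\bigr)>1$; the first inequality is in fact strict (since $\cos\theta_p>-1$) and the second is only $\ge 1$, but the combination still gives the strict inequality you need.
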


\subsection*{Construction of a lift}
We recall the construction of a lift of  $(x*_py)$ from  \cite[Section 7]{chas2016extended}. Let $P_0$ be a lift of $p$. To construct a lift of $(x*_py)$ passing through $P_0$, we do the following (see Figure \ref{zigzag}). First consider the geodesic axis $A_x(P_0)$ of $x$. Travel the length $l_x$, from $P_0$, in the forward direction along  $A_x(P_0)$ to reach the point $P_1$. Now from $P_1$ travel the distance $l_y$ along $A_y(P_1)$ to reach $P_2$. We continue the same process from $P_2$ and repeat it indefinitely. Similarly we do the same construction from $P_0$ in backward direction. The construction yields a lift of $(x*_py)$ which is a  bi-infinite piecewise geodesic whose geodesic pieces are consecutive geodesic arcs of axes of $x$ and $y$ of length $l_x$ and $l_y$ respectively.

\begin{figure}[h]
	\centering
	\includegraphics[trim = 300mm 75mm 350mm 10mm, clip, width=6cm]{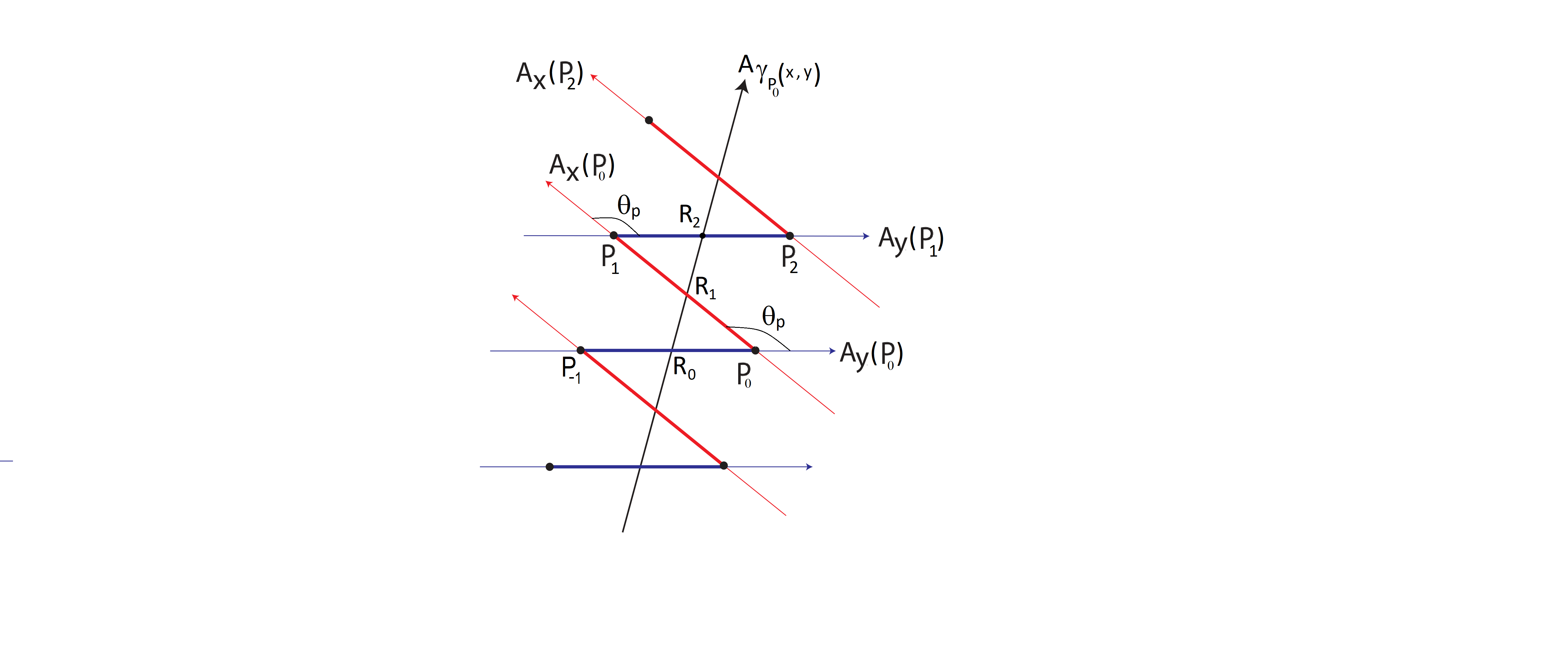}
	\caption{~}\label{zigzag}
\end{figure}

Let $R_{i}$ be the midpoint of $P_{i-1}$ and $P_{i}$ for all $i\in\Z$. Then by Theorem \ref{thm:beardon}, there exists an axis of $(x*_py)$  which passes through $R_i$'s.
We use the notation $\g_p(x,y)$ and $A_{\g_p(x,y)}$ for a generic lift of $x*_py$ and its axis obtained from the above construction. We call the geodesic arcs of $\g_p(x,y)$ 
corresponding to axes of $x$ and $y$ as \emph{$x$-pieces} and \emph{$y$-pieces} respectively. Therefore any lift $\g_p(x,y)$ obtained from the above construction is an oriented piecewise geodesic where the geodesic pieces are $x$-pieces and $y$-pieces appearing alternatively. Unless otherwise mentioned, by a lift of $(x*_py)$ we mean a lift obtained by the above construction. 

\begin{lemma}\label{lem:zigzag}
 Let $\g_p(x,y)$ be a lift of $x*_py$ and $A_{\g_p(x,y)}$ be its axis. Consider any two consecutive $x$-pieces (respectively $y$-pieces) of $\g_p(x,y)$ and let $R_t$ and $R_{t+2}$ be the intersection points between these pieces and $A_{\g_p(x,y)}$. Then the distance between $R_t$ and $R_{t+2}$ is $l_{x*_py}.$ Moreover there exist a $y$-piece (respectively $x$-piece) which intersects $A_{\g_p(x,y)}$ at $R_{t+1}$ which is the midpoint of $R_t$ and $R_{t+2}.$  
  
\end{lemma}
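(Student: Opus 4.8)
The plan is to work entirely in the hyperbolic plane $\H$ with the piecewise-geodesic lift $\g_p(x,y)$ and its axis $A_{\g_p(x,y)}$, and to exploit the translation symmetry of this configuration. First I would recall from the construction preceding the lemma that $\g_p(x,y)$ is built by alternately travelling distance $l_x$ along axes of $x$ and distance $l_y$ along axes of $y$, producing vertices $\ldots,P_{t-1},P_t,P_{t+1},\ldots$, and that $R_i$ denotes the midpoint of $P_{i-1}P_i$. By Theorem \ref{thm:beardon} the geodesic through two consecutive midpoints $R_i,R_{i+1}$ (say the ones straddling an $x$-piece $P_{i-1}P_i$ and the adjacent $y$-piece $P_iP_{i+1}$) is an axis of $x*_py$, and since the translation length along that axis of the corresponding hyperbolic element is $l_{x*_py}$, the entire piecewise-geodesic $\g_p(x,y)$ is invariant under a hyperbolic translation $T$ with axis $A_{\g_p(x,y)}$ and translation length $l_{x*_py}$.

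The key step is to identify precisely how $T$ acts on the vertices and midpoints. Since $T$ preserves $\g_p(x,y)$ and maps $x$-pieces to $x$-pieces and $y$-pieces to $y$-pieces (it is orientation-preserving and the pieces alternate with prescribed lengths $l_x,l_y$ that determine the pattern), $T$ must shift the indexing by $2$: $T(P_i)=P_{i+2}$ and hence $T(R_i)=R_{i+2}$ for all $i$. I would then use the defining property of an axis: $A_{\g_p(x,y)}$ is the unique geodesic preserved by $T$, and $T$ translates every point of $A_{\g_p(x,y)}$ by distance exactly $l_{x*_py}$ along it. So if $R_t$ and $R_{t+2}$ both lie on $A_{\g_p(x,y)}$ — which is exactly the content of Lemma \ref{lem:zigzag} applied via Theorem \ref{thm:beardon} to the two pieces straddled — then $d(R_t,R_{t+2})=d(R_t,T(R_t))=l_{x*_py}$. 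This handles the first assertion; the statement for two consecutive $y$-pieces is symmetric, obtained by relabelling.

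For the ``moreover'' part I would argue that $R_{t+1}$, the midpoint of the $y$-piece (resp. $x$-piece) sitting between the two $x$-pieces (resp. $y$-pieces), also lies on $A_{\g_p(x,y)}$ and is the midpoint of $R_t$ and $R_{t+2}$ on that geodesic. That $R_{t+1}\in A_{\g_p(x,y)}$ follows again from Theorem \ref{thm:beardon} (it is a midpoint of one of the constituent $x$- or $y$-pieces, so it is one of the points the axis is constructed to pass through). To see it is the midpoint, note that the half-translation $T^{1/2}$ — the hyperbolic element with the same axis and translation length $l_{x*_py}/2$ — sends $R_t$ to $R_{t+1}$ and $R_{t+1}$ to $R_{t+2}$; indeed $T^{1/2}$ maps $x$-pieces to $y$-pieces and vice versa while shifting the index by $1$, because its square is $T$ and it preserves $\g_p(x,y)$ with the correct orientation. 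Hence $d(R_t,R_{t+1})=d(R_{t+1},R_{t+2})=l_{x*_py}/2$ along $A_{\g_p(x,y)}$, which is the claim.

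The main obstacle is the bookkeeping that justifies ``$T$ shifts the index by $2$'' and ``$T^{1/2}$ shifts the index by $1$'' — i.e., checking that the only symmetries of the alternating $l_x$/$l_y$ piecewise-geodesic pattern compatible with orientation are these index shifts, so that $T$ and $T^{1/2}$ are forced to act as described rather than, say, reversing orientation or fixing the pattern. This is essentially a rigidity statement for the combinatorics of the zigzag together with the fact (from Theorem \ref{thm:beardon}) that the axis is genuinely translated, not fixed, by the deck transformation corresponding to $x*_py$; once that is pinned down, everything else is a direct application of the formula in Theorem \ref{thm:beardon} and the elementary geometry of hyperbolic translations along a common axis.
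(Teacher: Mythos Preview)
Your argument for the first assertion is correct and is essentially what the paper intends: the deck transformation $T$ corresponding to $x*_py$ satisfies $T(P_i)=P_{i+2}$, hence $T(R_i)=R_{i+2}$, and since $T$ translates its axis $A_{\g_p(x,y)}$ by $l_{x*_py}$ and all $R_i$ lie on that axis, $d(R_t,R_{t+2})=l_{x*_py}$.

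The gap is in the ``moreover'' part. Your claim that the half-translation $T^{1/2}$ preserves $\g_p(x,y)$ and ``maps $x$-pieces to $y$-pieces while shifting the index by $1$'' is false in general: $T^{1/2}$ is an isometry, so it would send a segment of length $l_x$ to a segment of length $l_x$, not one of length $l_y$. When $l_x\neq l_y$ the zigzag has no orientation-preserving isometry advancing the index by $1$, and the fact that $(T^{1/2})^2=T$ preserves $\g_p(x,y)$ in no way implies that $T^{1/2}$ does. So this is not merely missing bookkeeping; the proposed mechanism does not exist.

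The argument the paper is pointing to via Theorem~\ref{thm:beardon} is to compute $d(R_t,R_{t+1})$ directly. In the geodesic triangle $R_tP_tR_{t+1}$ the two sides meeting at $P_t$ have lengths $l_x/2$ and $l_y/2$ and the included angle is $\pi-\theta_p$; the hyperbolic law of cosines gives
\[
\cosh d(R_t,R_{t+1})=\cosh\!\Big(\frac{l_x}{2}\Big)\cosh\!\Big(\frac{l_y}{2}\Big)+\sinh\!\Big(\frac{l_x}{2}\Big)\sinh\!\Big(\frac{l_y}{2}\Big)\cos\theta_p=\cosh\!\Big(\frac{l_{x*_py}}{2}\Big),
\]
so $d(R_t,R_{t+1})=l_{x*_py}/2$ for every $t$. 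Equivalently, Beardon's proof expresses $T=\rho_{R_{t+1}}\rho_{R_t}$ as a product of half-turns about consecutive midpoints, which forces each consecutive distance to be half the translation length; it is this half-turn about $R_{t+1}$ (an involution reversing $A_{\g_p(x,y)}$ and swapping $R_t$ with $R_{t+2}$), not a half-translation along the axis, that gives the midpoint statement.
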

\begin{proof}
	The proof follows directly from the construction of $\g_p(x,y)$ (Figure \ref{zigzag}) and Theorem \ref{thm:beardon}. For details see \cite[Section 7]{chas2016extended}.  
\end{proof}

\section{Technical lemmas}\label{sec:tech}
In this section we prove the key lemmas that we use for proving our theorems. We prove them separately because they might be of independent interest. 

\begin{lemma}\label{lem:disjoint}
	Let $F\in \T$ and $x,y,z$ be three pairwise distinct oriented $F$-geodesics such that $x$ is simple. Let $p\in x\cap y$ and $q\in x\cap z$ such that $\theta_p=\theta_q$ and $n$ be any positive integer. Consider any two lifts  $\g_p(x^n,y)$ and $\g_q(x^n,z)$ of $(x^n*_py)$ and $(x^n*_qz)$ respectively. Suppose $\g_p(x^n,y)$ and $\g_q(x^n,z)$ have the same axis $A$.  Then for any $x$-piece $P_tP_{t+1}$ of $\g_p(x^n,y)$ and any $x$-piece $Q_sQ_{s+1}$ of $\g_q(x^n,z)$, we have $P_tP_{t+1}\cap Q_sQ_{s+1}=\emptyset$. 
\end{lemma}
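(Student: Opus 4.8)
The plan is to argue by contradiction: suppose some $x$-piece $P_tP_{t+1}$ of $\g_p(x^n,y)$ meets some $x$-piece $Q_sQ_{s+1}$ of $\g_q(x^n,z)$ at a point. Both $P_tP_{t+1}$ and $Q_sQ_{s+1}$ lie on axes of the geodesic $x$, i.e.\ they are segments of lifts $A_x(P_t)$ and $A_x(Q_s)$ respectively. Since $x$ is simple, two distinct axes of $x$ are disjoint; hence if these two $x$-pieces intersect, they must lie on the \emph{same} axis $A_x$ of $x$. So I would reduce to the situation where a single lift $A_x$ of the simple geodesic $x$ carries consecutive arcs of both zigzags near the common axis $A$.

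The next step is to extract the geometric rigidity from Lemma~\ref{lem:zigzag} and Theorem~\ref{thm:beardon}. The hypothesis $\theta_p = \theta_q$ together with $l_{x^n} = n\, l_x$ forces, via the $\cosh$ formula in Theorem~\ref{thm:beardon}, that $l_{x^n *_p y}$ and $l_{x^n *_q z}$ need not be equal in general, but more importantly it pins down the position of the axis $A$ relative to the $x$-piece: the midpoint $R$ of the $x$-piece and the angle $A$ makes with that piece are determined by $l_{x^n}$, $l_y$ (resp.\ $l_z$) and $\theta_p$ (resp.\ $\theta_q$) through the construction in Section~\ref{sec:lift}. By Lemma~\ref{lem:zigzag}, along a common axis $A$ the $x$-pieces of $\g_p(x^n,y)$ are spaced exactly $l_{x^n*_py}$ apart (measured between consecutive intersection points with $A$), and likewise the $x$-pieces of $\g_q(x^n,z)$ are spaced $l_{x^n*_qz}$ apart. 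If an $x$-piece of the first zigzag and an $x$-piece of the second both lie on the same lift $A_x$ of $x$ and both cross the common axis $A$, then the segment $A_x$ meets $A$ in a single point (two distinct geodesics in $\H$ meet at most once), so that single point is simultaneously some $R_t$ of the first zigzag and some $Q_sQ_{s+1}\cap A$ of the second. Comparing the local picture at that point — the angle between $A_x$ and $A$, and on which side the adjacent $y$-piece (resp.\ $z$-piece) of length $l_y$ (resp.\ $l_z$) attaches, all governed by $\theta_p=\theta_q$ — I would show the two zigzags must in fact coincide piece-by-piece in a neighbourhood, hence $(x^n*_py)$ and $(x^n*_qz)$ have a common axis arc of positive length with the $y$-piece and $z$-piece emanating identically; but a $y$-piece is a lift of $y$ and a $z$-piece is a lift of $z$, and $y\neq z$ are distinct geodesics in $\Sigma$, so their lifts cannot share a positive-length segment. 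This contradiction shows no two $x$-pieces can intersect.

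Alternatively, and perhaps more cleanly, I would phrase the core step as follows: if $P_tP_{t+1}$ and $Q_sQ_{s+1}$ intersect, put them on the common lift $A_x$; then $A$ and $A_x$ meet at one point $P$. On one side of $P$ along $A_x$ there is an $x$-piece of $\g_p(x^n,y)$ of length $l_{x^n}$ ending where a $y$-piece attaches, and similarly an $x$-piece of $\g_q(x^n,z)$; because the two zigzags share the axis $A$ and the shift of an $x$-piece relative to $A$ depends only on the pair $(l_{x^n},l_y,\theta_p)$ resp.\ $(l_{x^n},l_z,\theta_q)$, the attachment points of the neighbouring $y$- and $z$-pieces along $A_x$ are forced to coincide, and the $y$-piece and $z$-piece then both start at the same point of $A_x$ making the same angle with it on the same side (the side is fixed by the orientation of $\Sigma$ and the common direction of $A$), so they overlap on a segment of positive length — impossible for lifts of the distinct geodesics $y$ and $z$.

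\textbf{Main obstacle.} The delicate part is the bookkeeping that turns ``$\theta_p=\theta_q$ and a common axis $A$'' into ``the $x$-pieces are positioned identically along $A_x$'': one has to verify, using Theorem~\ref{thm:beardon} and the explicit zigzag construction, that the perpendicular (or oblique) foot of $A$ on an $x$-piece, the length of the $x$-piece, and the angle of attachment of the adjacent $y$/$z$-piece are all functions of the data $(l_{x^n},\theta_p)$ that the two configurations share, with the remaining freedom ($l_y$ vs.\ $l_z$) not affecting the $x$-piece's placement or the side on which the next piece attaches. Handling the orientation/side issue carefully — ensuring the $y$-piece and $z$-piece emanate on the \emph{same} side of $A_x$, which is where simplicity of $x$ and the fixed orientation of $\Sigma$ enter — is the crux; once that is pinned down, the contradiction with $y\neq z$ is immediate.
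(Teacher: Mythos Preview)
Your overall strategy matches the paper's: reduce to both $x$-pieces lying on a common lift $A_x$ (using simplicity of $x$), then force the adjacent $y$- and $z$-pieces to lie on the same geodesic, contradicting $y\neq z$. However, you are missing the one observation that makes the argument a two-line proof, and in its absence you introduce spurious complications.

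The key fact, stated explicitly in the paper's proof and already contained in Theorem~\ref{thm:beardon} and Lemma~\ref{lem:zigzag}, is that the axis $A$ meets \emph{every} piece of the zigzag at that piece's \emph{midpoint}. This is a universal geometric fact about the construction; it does not depend on $l_y$, $l_z$, or $\theta_p$. Once you know this, the argument is immediate: if the two $x$-pieces lie on the same lift $A_x$, then $A\cap A_x$ is a single point, that point is the midpoint of both $x$-pieces, and both have length $n\,l_x$, so the two segments are literally equal, $P_tP_{t+1}=Q_sQ_{s+1}$. In particular $P_{t+1}=Q_{s+1}$, and the adjacent $y$-piece and $z$-piece emanate from this common endpoint at the common angle $\theta_p=\theta_q$ (on the same side, since their midpoints must also land on $A$), hence lie on the same geodesic line in $\H$. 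That line is simultaneously an axis of $y$ and of $z$; applying the midpoint property once more gives $l_y=l_z$, so $y=z$, a contradiction.

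Your ``main obstacle'' --- verifying that the placement of the $x$-piece is a function only of data the two configurations share --- is therefore not an obstacle: the placement is determined by the midpoint-on-$A$ property alone, with no dependence on $l_y$ or $l_z$ to worry about. Your formulation that ``the shift of an $x$-piece relative to $A$ depends only on the pair $(l_{x^n},l_y,\theta_p)$'' is misleading, because if the shift genuinely depended on $l_y$ versus $l_z$ you would have no reason to expect coincidence. Replace that discussion with the midpoint observation and the proof collapses to the paper's short argument.
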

\begin{proof}
	As $x$ is simple any two distinct geodesic lifts of $x$ are disjoint. Therefore any two $x$-pieces are either disjoint or intersect in a geodesic segment. 
	
	If possible suppose $P_tP_{t+1}$ and $Q_sQ_{s+1}$  intersect in a geodesic segment. By the construction $A$ intersects both $P_tP_{t+1}$ and $Q_sQ_{s+1}$ at their midpoints. Hence $P_tP_{t+1}=Q_sQ_{s+1}$. By the assumption $\theta_p=\theta_q$. Therefore - by construction - the $y$-piece of $\g_p(x^n,y)$ occurring immediately after $P_tP_{t+1}$ coincides with the $z$-piece of $\g_q(x^n,y)$ occurring immediately  after $Q_sQ_{s+1}$. This implies tha an axis of $y$ coincides with an axis of $z$ and $l_y=l_z$, which contradicts the fact that the geodesics $y$ and $z$ are distinct. 
\end{proof}

\begin{lemma}\label{lem:orienteql}
	Let $F\in \T$ and $x,y$ and $z$ be three oriented $F$-geodesics. Let $p$ be an intersection point between $x$ and $y$ and $q$ be an intersection point between $x$ and $z$. Suppose $\la x^m*_py\ra=\la x^m*_qz \ra$ for two distinct positive integral values of $m$. Then for any $F'\in \T$, $l_y(F')=l_z(F')$ and $\theta_p(F')=\theta_q(F')$.  
\end{lemma}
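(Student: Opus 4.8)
The plan is to use Theorem~\ref{thm:beardon} to convert the hypothesis $\la x^m*_py\ra = \la x^m*_qz\ra$ into an identity between length functions, and then exploit the rigidity of length functions on Teichm\"uller space. First I would fix an arbitrary $F'\in\T$ and apply Theorem~\ref{thm:beardon} with $x$ replaced by $x^m$ (so that $l_{x^m}=m\,l_x$): the hypothesis $l_{x^m*_py}(F')=l_{x^m*_qz}(F')$ becomes
\[
\cosh\!\Big(\tfrac{m l_x}{2}\Big)\cosh\!\Big(\tfrac{l_y}{2}\Big)+\sinh\!\Big(\tfrac{m l_x}{2}\Big)\sinh\!\Big(\tfrac{l_y}{2}\Big)\cos\theta_p
=\cosh\!\Big(\tfrac{m l_x}{2}\Big)\cosh\!\Big(\tfrac{l_z}{2}\Big)+\sinh\!\Big(\tfrac{m l_x}{2}\Big)\sinh\!\Big(\tfrac{l_z}{2}\Big)\cos\theta_q,
\]
all quantities evaluated at $F'$. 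This should hold for the two distinct positive integers $m=m_1,m_2$ given in the hypothesis.

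Next I would treat this as a linear system in the two ``unknowns'' $\cosh(l_x/2\cdot m)$ and $\sinh(l_x/2\cdot m)$; more precisely, rewrite each equation as
\[
\cosh\!\Big(\tfrac{m l_x}{2}\Big)\Big(\cosh\tfrac{l_y}{2}-\cosh\tfrac{l_z}{2}\Big)
+\sinh\!\Big(\tfrac{m l_x}{2}\Big)\Big(\sinh\tfrac{l_y}{2}\cos\theta_p-\sinh\tfrac{l_z}{2}\cos\theta_q\Big)=0.
\]
Set $a=\cosh\tfrac{l_y}{2}-\cosh\tfrac{l_z}{2}$ and $b=\sinh\tfrac{l_y}{2}\cos\theta_p-\sinh\tfrac{l_z}{2}\cos\theta_q$. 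We then have $a\cosh(m_i l_x/2)+b\sinh(m_i l_x/2)=0$ for $i=1,2$. The vectors $(\cosh(m_1 l_x/2),\sinh(m_1 l_x/2))$ and $(\cosh(m_2 l_x/2),\sinh(m_2 l_x/2))$ are linearly independent whenever $m_1 l_x \neq m_2 l_x$, i.e.\ whenever $l_x>0$; this is automatic since $x$ is an $F'$-geodesic of a hyperbolic surface (it is a closed geodesic, hence has positive length, or if $x$ is peripheral one can still argue, but the essential case is the one that matters). Linear independence forces $a=b=0$. From $a=0$ we get $l_y(F')=l_z(F')$ (using that $\cosh$ is injective on $[0,\infty)$), and then $b=0$ gives $\sinh\tfrac{l_y}{2}(\cos\theta_p-\cos\theta_q)=0$; since $l_y(F')>0$ we conclude $\cos\theta_p(F')=\cos\theta_q(F')$, and because $\theta_p,\theta_q\in(0,\pi)$ (they are angles at transverse intersection points of oriented geodesics, measured between positive directions), injectivity of $\cos$ on $(0,\pi)$ yields $\theta_p(F')=\theta_q(F')$.

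The main obstacle I anticipate is the degenerate case $l_x(F')=0$, which would occur only if $x$ were homotopically trivial or peripheral; one should check at the outset that $x$ may be assumed essential (otherwise $x^m*_py$ is just a reparametrization of $y$ up to the relevant moves, and the statement is either vacuous or trivial), so that $l_x(F')>0$ for every $F'\in\T$ and the linear-independence step goes through uniformly. A secondary point to be careful about is the precise convention for which lift / which angle $\theta_p$ enters Theorem~\ref{thm:beardon} when the exponent is $m>1$: one must confirm that raising $x$ to the $m$-th power leaves the angle at $p$ unchanged (it does, since $x^m$ traverses the same axis as $x$) and only rescales $l_x$ to $m l_x$, so that the displayed cosh-identity is exactly the one obtained by substituting $x^m$ for the first curve in Beardon's formula. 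Once these two points are settled, the argument is the elementary linear-algebra computation sketched above, valid for every $F'\in\T$ simultaneously.
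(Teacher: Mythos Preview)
Your proposal is correct and follows essentially the same approach as the paper: both fix an arbitrary $F'\in\T$, apply Theorem~\ref{thm:beardon} with $x$ replaced by $x^m$, and use the two distinct values of $m$ to force $\cosh(l_y/2)=\cosh(l_z/2)$ and then $\theta_p=\theta_q$. The only cosmetic difference is that the paper divides through by $\sinh(ml_x/2)$ and argues that the resulting $\coth(ml_x/2)$ factor varies with $m$ while the other side does not, whereas you phrase the same step as linear independence of the vectors $(\cosh(m_i l_x/2),\sinh(m_i l_x/2))$; these are equivalent observations.
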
 
\begin{proof}
	Fix any arbitrary metric $F'\in\T$. To simplify notation throughout the proof we measure lengths and angles with respect to $F'$ without mentioning it explicitly. Observe that the equality $\la x^m*_py\ra=\la x^m*_qz \ra$ is topological.
	
	 Now $l_{x^n}=nl_x$ for all positive integer $n$. As $\la x^m*_py\ra=\la x^m*_qy \ra$, the geodesics corresponding to the two free homotopy classes are the same. Hence $l_{ x^m*_py}=l_{x^m*_qz}$ for two distinct values of $m$. Therefore by Theorem \ref{thm:beardon}, 
	$$\cosh\left(\frac{ml_x}{2}\right)\cosh\left(\frac{l_y}{2}\right)+
	\sinh\left(\frac{ml_x}{2}\right)\sinh\left(\frac{l_y}{2}\right)\cos\theta_p$$$$ 
	 =\cosh\left(\frac{ml_x}{2}\right)\cosh\left(\frac{l_z}{2}\right)+
	\sinh\left(\frac{ml_x}{2}\right)\sinh\left(\frac{l_z}{2}\right)\cos\theta_q.$$
This implies 
$$ \coth(\frac{ml_{{x}}}{2})\{\cosh(\frac{ l_{{y}}}{2})-\cosh(\frac{ l_{{z}}}{2})\}=-\sinh(\frac{l_{{y}}}{2})\cos(\theta_p)+ \sinh(\frac{ l_{{z}}}{2} )\cos(\theta_q).	
$$	The left hand side of the equation depends on $m$ but the right hand side is independent of $m$. Hence as the equality holds for two distinct values of $m$, $\{\cosh(\frac{ l_{{y}}}{2})-\cosh(\frac{ l_{{z}}}{2})\}=0.$ Therefore $l_{y}=l_{z}$ and $\theta_p=\theta_q$. 
	
\end{proof}

\begin{lemma}\label{lem:signeql}
	Let $F\in \T$ and $x,y$ and $z$ be three oriented $F$-geodesics such that $x$ is simple. Let $p$ be an intersection point between $x$ and $y$ and $q$ be an intersection point between $x$ and $z$. Suppose $\la x^m*_py\ra=\la x^m*_qz \ra$ for two distinct positive integral values of $m$. Then $\e_p=\e_q$. 
\end{lemma}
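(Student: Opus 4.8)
The plan is to combine the geometric information extracted in Lemmas \ref{lem:orienteql} and \ref{lem:disjoint} with the axis-construction of Section \ref{sec:lift}. By Lemma \ref{lem:orienteql}, the hypothesis $\la x^m*_py\ra=\la x^m*_qz\ra$ for two distinct positive integers $m$ forces $l_y(F')=l_z(F')$ and $\theta_p(F')=\theta_q(F')$ for every $F'\in\T$; in particular this holds for our fixed metric $F$, so the zigzag lifts $\g_p(x^m,y)$ and $\g_q(x^m,z)$ are built from $x$-pieces of the same length $ml_x$ and $y$-/$z$-pieces of the same length $l_y=l_z$, making the same angle with the adjacent $x$-piece. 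Since the two free homotopy classes agree, the geodesics in $\la x^m*_py\ra$ and $\la x^m*_qz\ra$ coincide, so we may choose lifts $\g_p(x^m,y)$ and $\g_q(x^m,z)$ having the same axis $A$ (they are translates of one another by an element of $\pi_1(\Sigma)$, and after translating we arrange the axes to coincide, with matching orientation since by Theorem \ref{thm:beardon} the orientation of the axis is determined by the oriented construction).

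Next I would compare the two zigzags piece by piece. Both $\g_p(x^m,y)$ and $\g_q(x^m,z)$ meet $A$ at the midpoints $R_i$ of their consecutive pieces, and by Lemma \ref{lem:zigzag} the spacing between consecutive $x$-piece midpoints is $l_{x^m*_py}=l_{x^m*_qz}$, with a $y$-piece (resp. $z$-piece) midpoint exactly halfway in between. Hence, after a translation along $A$ by a period of the common geodesic, the $x$-piece midpoints of the two zigzags can be aligned. Now Lemma \ref{lem:disjoint} (whose hypotheses $\theta_p=\theta_q$ and $x$ simple are exactly what we have) says that an $x$-piece of $\g_p(x^m,y)$ and an $x$-piece of $\g_q(x^m,z)$ sharing the same midpoint on $A$ must in fact be disjoint — which, for two geodesic segments crossing $A$ at the same point, is impossible unless they are the *same* segment. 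Therefore the aligned $x$-pieces coincide: $A_x(P_t)=A_x(Q_s)$ as oriented geodesics, and the adjacent $y$-piece and $z$-piece start at the same point $P_{t+1}=Q_{s+1}$ and make the same angle $\theta_p=\theta_q$ with the $x$-piece on the same side.

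The last step is to read off the signs. The sign $\e_p$ of the intersection of $x$ and $y$ at $p$ is computed (in the fixed orientation of $\Sigma$, hence of $\H$) from the ordered pair of tangent directions of the $x$-piece and the $y$-piece emanating from $P_{t+1}$; likewise $\e_q$ is computed from the $x$-piece and $z$-piece at $Q_{s+1}$. Since these two ordered pairs of oriented segments coincide, the two frames have the same orientation, so $\e_p=\e_q$. I expect the main obstacle to be the bookkeeping in the alignment step: one must be careful that translating along $A$ to match the $x$-piece midpoints is consistent with the indexing of the zigzag (i.e. that an $x$-piece of one is matched against an $x$-piece — not a $y$-piece — of the other), and that the orientations of the common axis $A$ induced by the two constructions genuinely agree so that the signs are compared in the same frame; this is where the two-distinct-values-of-$m$ hypothesis, via Lemma \ref{lem:orienteql}, and the precise statement of Theorem \ref{thm:beardon} about the oriented axis are essential.
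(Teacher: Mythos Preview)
Your approach is genuinely different from the paper's, and it has a real gap at the alignment step --- the very place you flagged as delicate.

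The paper argues by varying the hyperbolic metric rather than by comparing zigzags in a fixed one. Suppose $\e_p=-\e_q$, say $\e_p=1$. Lemma~\ref{lem:orienteql} gives $\theta_p(F')=\theta_q(F')$ for \emph{every} $F'\in\T$. Now deform $F$ by a left twist along the simple geodesic $x$: by \cite[Lemma~2.1 and Lemma~6.1]{kabiraj2016center}, the angle at a crossing with sign $+1$ strictly increases while the angle at a crossing with sign $-1$ strictly decreases. Thus $\theta_p$ and $\theta_q$ move in opposite directions along the deformation, contradicting their equality at every point of $\T$. That is the whole argument.

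In your proof, the crucial claim is that ``after a translation along $A$ by a period of the common geodesic, the $x$-piece midpoints of the two zigzags can be aligned.'' This does not follow. The $x$-piece midpoints of $\g_p(x^m,y)$ lie in a coset $a+L\Z$ along $A$ (with $L=l_{x^m*_py}$), and those of $\g_q(x^m,z)$ in a coset $b+L\Z$; the offset $a-b\pmod L$ is a geometric invariant with no a~priori reason to vanish. The only translations carrying $\g_q(x^m,z)$ to another genuine lift with axis $A$ are by multiples of the primitive period along $A$, and these do not change that coset. If instead you translate by an arbitrary amount to force the midpoints to match, the resulting curve is no longer a lift of $x^m*_qz$, so Lemma~\ref{lem:disjoint} no longer applies to it. Indeed, Lemma~\ref{lem:orientextang} is written precisely to handle the situation in which the $x$-piece of $\g_q$ meets $A$ strictly between $R_1$ and $R_3$ rather than at $R_2$; if your alignment were automatic, that lemma would be superfluous. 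The zigzag comparison in a single metric therefore does not by itself pin down the sign --- the paper's use of the full Teichm\"uller family, via the equality $\theta_p(F')=\theta_q(F')$ for all $F'$ together with the opposite monotonicity of the two angles under twists, is exactly what supplies the missing rigidity.
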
 

\begin{proof}
	If possible suppose $\e_p=-\e_q$. Without loss of generality assume $\e_p=1$. By Lemma \ref{lem:orienteql}, $\theta_p(F')=\theta_q(F')$ for all $F'\in\T$. By \cite[Lemma 2.1 and Lemma 6.1]{kabiraj2016center}, if we change the metric performing a left twist deformation along $x$, $\theta_p$ strictly increases but $\theta_q$ strictly decreases, which contradicts the equality of the angles for all metric in $\T$.   
\end{proof}

\begin{lemma}\label{lem:orientextang}
	Let $F\in \T$ and $x,y$ and $z$ be three pairwise distinct oriented $F$-geodesics. Let $p$ be an intersection point between $x$ and $y$ and $q$ be an intersection point between $x$ and $z$ such that $\e_p=\e_q$. Suppose $l_y=l_z$ and $x$ is simple. If for any positive integer $n$, $\la x^n*_py\ra=\la x^n*_qz \ra$  then one of the following is true:\\	
	1) either there exist $t_1\in x\cap y$ such that $\theta_{t_1}<\theta_p=\theta_q$,\\
	2) or there  exist $t_2\in x\cap z$ such that $\theta_{t_2}<\theta_p=\theta_q$.
\end{lemma}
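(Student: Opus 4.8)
The plan is to work entirely in $\H$ with a fixed metric $F$ and to use the zigzag lift construction of Section~\ref{sec:lift} together with Lemma~\ref{lem:zigzag} and Lemma~\ref{lem:disjoint}. Fix a lift $\g_p(x^n,y)$ of $x^n*_py$ and a lift $\g_q(x^n,z)$ of $x^n*_qz$; since $\la x^n*_py\ra=\la x^n*_qz\ra$, these two piecewise-geodesic curves have conjugate axes, so after applying a deck transformation I may assume they share a common axis $A$, and after a further translation along $A$ I may line up two of the midpoints $R_i$. Because $l_{x^n*_py}=l_{x^n*_qz}=:L$ (both free homotopy classes have the same geodesic), Lemma~\ref{lem:zigzag} says the marked points $R_i$ on $A$ are spaced exactly $L/2$ apart for \emph{both} curves, so the two families of midpoints on $A$ coincide. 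Thus at each $R_i$ the $x$-pieces of $\g_p(x^n,y)$ and the $x$-pieces of $\g_q(x^n,z)$ are interleaved along $A$, crossing $A$ at their common midpoints.

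Next I would bring in the simplicity of $x$. By Lemma~\ref{lem:disjoint} (applicable since $\theta_p=\theta_q$ is exactly the hypothesis we would be trying to contradict — so here I instead argue directly), any $x$-piece of $\g_p$ and any $x$-piece of $\g_q$ are disjoint unless they are equal; if all of them were pairwise equal the $y$-pieces and $z$-pieces would also have to match up, forcing an axis of $y$ to equal an axis of $z$, hence $\la y\ra=\la z\ra$ and then (tracking orientation via $\e_p=\e_q$ and $\theta_p=\theta_q$) we would get $\la x^n*_py\ra$ and the whole situation collapsing to a case already excluded; so I may assume the two curves are genuinely different as subsets of $\H$. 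The key geometric point is then a \textbf{comparison of signed displacements along $A$}: an $x$-piece making angle $\theta$ with the next $y$-piece at their shared axis contributes a certain ``offset'' of the next midpoint relative to the previous one, and by Theorem~\ref{thm:beardon} this offset is a monotone function of the angle (larger angle, via $\cos\theta_p$ decreasing, changes $L$ and hence the required spacing). Since both zigzags have midpoints spaced $L/2$ on the \emph{same} $A$, but the $x$-pieces of $\g_p$ all make angle $\theta_p$ with their adjacent $y$-pieces while the $x$-pieces of $\g_q$ all make angle $\theta_q$ with their adjacent $z$-pieces, the only way two distinct zigzags can share $A$ and the same midpoint spacing is if the angle data is not constant along at least one of them — i.e.\ $x$ meets $y$ at an angle strictly smaller than $\theta_p$ at some other point $t_1$, or $x$ meets $z$ at an angle strictly smaller than $\theta_q$ at some $t_2$. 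Concretely, I would follow the common axis $A$ from a midpoint $R_i$ that is a midpoint of an $x$-piece of $\g_p$ and locate where the corresponding $x$-piece of $\g_q$ sits; because these $x$-pieces are disjoint but each crosses $A$ at a point of the common lattice $\{R_i\}$, one of them must ``turn'' earlier, and reading off the turning angle at the vertex $P_{t+1}$ (an intersection of two consecutive geodesic pieces, one an axis of $x$, the other an axis of $y$ or $z$) exhibits the smaller angle.

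The main obstacle I anticipate is making the phrase ``turns earlier'' into a rigorous statement: I need a clean lemma saying that for a piecewise-geodesic zigzag built from pieces of fixed lengths ($l_{x^n}=nl_x$ and $l_y=l_z$) with a fixed axis $A$ and fixed midpoint spacing $L/2$, the position of each vertex $P_i$ (equivalently each turning angle) is determined by the sequence of turning angles, and two such zigzags that agree on $A$ and spacing but disagree as curves must disagree in the angle sequence — which, given that the $x$--$y$ angles are all $\theta_p$ along $\g_p$ and the $x$--$z$ angles all $\theta_q$ along $\g_q$ with $\theta_p=\theta_q$, forces the discrepancy into the \emph{other} intersection angles of $x$ with $y$ (resp.\ $z$) that get used when the lift $\g_p(x^n,y)$ is continued but recorded as pieces of $\g_q(x^n,z)$. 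I would handle this by a direct trigonometric computation in a single hyperbolic quadrilateral (two consecutive pieces and the two subsegments of $A$ between their midpoints), invoking Theorem~\ref{thm:beardon} once more to express the next midpoint's position and its dependence on the angle, and then push/pull along $x$ using the twist-deformation monotonicity from \cite{kabiraj2016center} exactly as in the proof of Lemma~\ref{lem:signeql} to convert ``the angle sequence is non-constant'' into the desired strict inequality $\theta_{t_1}<\theta_p$ or $\theta_{t_2}<\theta_q$; the simplicity of $x$ is what guarantees the relevant $x$-pieces are disjoint and hence that the discrepancy cannot be absorbed by pieces overlapping.
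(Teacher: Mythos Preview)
Your setup contains a genuine error that breaks the argument. After conjugating so that $\g_p(x^n,y)$ and $\g_q(x^n,z)$ share the axis $A$, you assert that a ``further translation along $A$'' lets you line up the two midpoint lattices. But the only isometries of $\H$ you may apply while keeping $\g_q(x^n,z)$ a lift of $x^n*_qz$ are deck transformations, and the deck transformations preserving $A$ form the cyclic group generated by translation of length $L=l_{x^n*_py}$. Both midpoint lattices are already $L$-periodic, so this freedom does nothing: in general the two families $\{R_i\}$ and $\{S_i\}$ are offset by some amount in $(0,L/2)$ that you cannot remove. Worse, if the midpoints \emph{did} coincide, then (since $l_y=l_z$ and $\theta_p=\theta_q$ force the $x$-pieces to meet $A$ at equal angles, and $\e_p=\e_q$ fixes the side) the $x$-pieces and hence the entire zigzags would coincide, giving $\la y\ra=\la z\ra$ immediately---a contradiction, not the conclusion you are after. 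So the picture you want (interleaved $x$-pieces through a common lattice of midpoints) is not the correct one.

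The paper's argument is much more direct and does not attempt any alignment. It fixes one period $R_1R_3$ of $\g_p(x,y)$ on $A$ (so $R_2$ is the midpoint of the $x$-piece $P_1P_2$) and simply observes that some $x$-piece $Q_1Q_2$ of $\g_q(x,z)$ must cross $A$ somewhere in $R_1R_3$. Simplicity of $x$ plus Lemma~\ref{lem:disjoint} forces the full geodesics $L_1\supset P_1P_2$ and $L_2\supset Q_1Q_2$ to be disjoint. A single incidence chase in the triangle $\triangle P_1R_1R_2$ then shows $L_2$ must hit the $y$-piece $P_0P_1$ at some $T_1$ and the adjacent $z$-piece $Q_2Q_3$ must hit $L_1$ at some $T_2$; their projections give $t_1\in x\cap y$ and $t_2\in x\cap z$. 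The decisive step---which your proposal gestures at but never isolates---is the angle-sum inequality for the hyperbolic quadrilateral $T_1P_1T_2Q_2$: its interior angles are $\theta_{t_1},\ \pi-\theta_p,\ \theta_{t_2},\ \pi-\theta_q$, so
\[
\theta_{t_1}+\theta_{t_2}<\theta_p+\theta_q=2\theta_p,
\]
and one of $\theta_{t_1},\theta_{t_2}$ is strictly below $\theta_p$. No monotonicity of offsets, no twist-deformation input, and no appeal to Theorem~\ref{thm:beardon} beyond the equality $\theta_p=\theta_q$ is needed.
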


\begin{proof}
	
	\begin{figure}[h]
		\centering
		\includegraphics[trim = 5mm 0mm 0mm 0mm, clip, width=13cm]{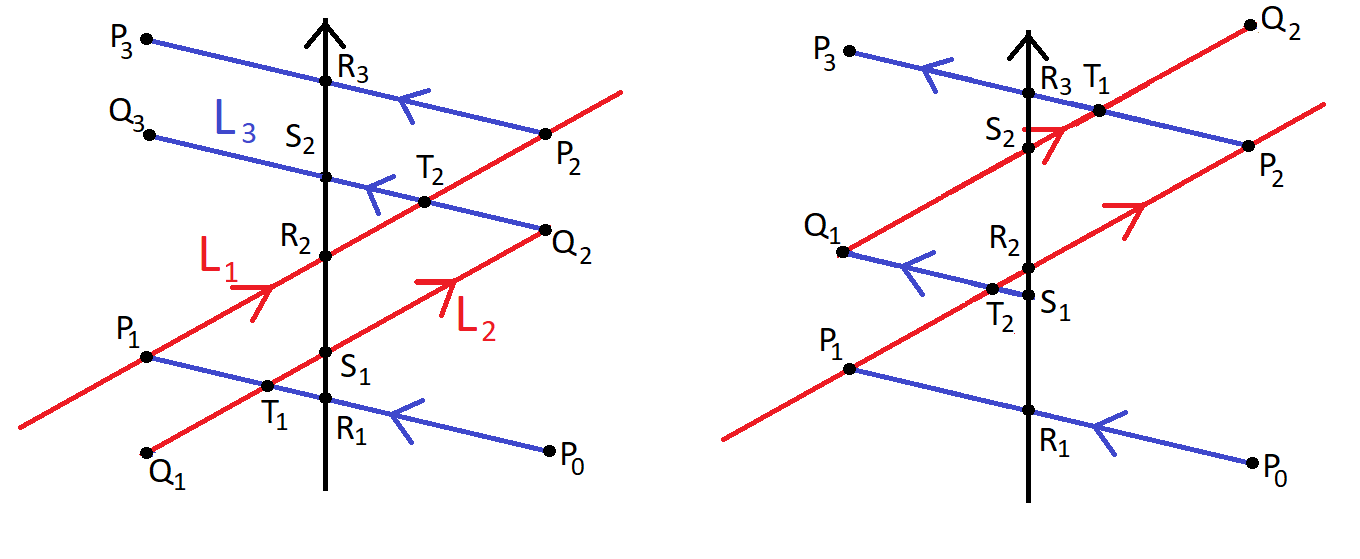}
		\caption{~}\label{angleproof}
	\end{figure}
	
		We prove the result for $n=1$. The angle at any intersection point $p$ between $x$ and $y$ is same as the angle at $p$ between $x^n$ and $y$ as they are physically the same point. Combining the last statement with the equality $l_{x^n}=nl_{x}$, the proof for $n>1$ follows by a similar argument.

	Without loss of generality assume $\e_p=1$. Fix a lift $\g_p(x,y)$ of $x*_py$ (see Figure \ref{angleproof}). As $\la x*_py\ra=\la x*_qz \ra$, both free homotopy classes have the same geodesic representative. Therefore $l_{x*_py}=l_{x*_qz}$. As $l_y=l_z$, by Theorem \ref{thm:beardon}, $\theta_p=\theta_q.$ 
	Also there exists a lift $\g_q(x,z)$ of $x*_qz$ such that $A_{\g_p(x,y)}=A_{\g_q(x,z)}$. We denote $A_{\g_p(x,y)}=A_{\g_q(x,z)}$ simply by $A$ and the length of an arc $RS$ by $l(RS)$.

	Choose a segment of $\g_p(x,y)$ consisting of two consecutive $y$-pieces $P_0P_1$ and $P_2P_3$ and an $x$-piece $P_1P_2$ between them. Let $R_i=P_{i-1}P_i\cap A$ for $i\in \{1,2,3\}$. By Lemma \ref{lem:zigzag}, $l(R_1R_3)=l_{x*_py}$.

	By the construction of  $\g_q(x,z)$, there exist an $x$-piece $Q_1Q_2$ of $\g_q(x,z)$ which intersects $A$ in the arc $R_1R_3$. There are two possibilities: Case (a) $Q_1Q_2$ intersects $A$  in $R_1R_2$ or Case (b) $Q_1Q_2$ intersects $A$ in  $R_2R_3$
	
{\bf\underline{Case (a) $Q_1Q_2$ intersects $A$  in $R_1R_2$:}}
 Consider left hand side picture of  Figure \ref{angleproof}. Let $Q_2Q_3$ be the $z$-piece in  $\g_q(x,z)$ occurring immediately after $Q_1Q_2$.    Let $L_1, L_2$ and $L_3$ be  the geodesics in $\H$ containing $P_1P_2,Q_1Q_2$ and $Q_2Q_3$ respectively. As $x$ is simple, either $L_1=L_2$ or $L_1\cap L_2=\emptyset$. If $L_1=L_2$ then $A\cap L_1=R_2=S_1=A\cap L_2$. Which implies $P_1P_2$ coincides with $Q_1Q_2$, contradicting Lemma \ref{lem:disjoint}. Therefore $L_1\cap L_2=\emptyset$, in particular $L_2\cap P_1P_2=\emptyset$. Consider the geodesic triangle $\triangle P_1R_1R_2$. As $L_2$ intersects $R_1R_2$ but does not intersect $P_1R_2$, $L_2$ must intersect $R_1P_1$. Denote the intersection point of $L_2$ and $R_1P_1$ by $T_1$. By construction, $Q_2Q_3$ intersects $A$ in $R_2R_3$. Therefore $Q_2Q_3$ must intersect $L_1$. Denote the intersection point between $Q_2Q_3$ and $L_1$ by $T_2$. Let $t_i$ be the projection of $T_i$ on $\Sigma$ for $i\in \{1,2\}$. Then $t_1\in x\cap y$ and $t_2\in x\cap z$.  Consider the geodesic quadrilateral  $\square T_1P_1T_2Q_2$. We have $\angle T_1+ \angle P_1+\angle T_2+\angle Q_2<2\pi$. Now $\angle P_1=\angle Q_2=\pi-\theta_p=\pi-\theta_q$, $\angle T_1=\theta_{t_1}$ and $\angle T_2=\theta_{t_2}$. Therefore we have $\theta_{t_1}+\theta_{t_2}<\theta_p+\theta_q$ which proves the claim.  

{\bf\underline{Case (b) $Q_1Q_2$ intersects $A$  in $R_2R_3$:}} 
 Consider right hand side picture of Figure \ref{angleproof} (right)). In this case we have to consider the $z$-piece $Q_0Q_1$ in  $\g_q(x,z)$ occurring before $Q_1Q_2$. By the same argument as above we get two points $T_1, T_2$ and their projections $t_1\in x\cap y$ and $t_2\in x\cap z$ with the desired property.    
\end{proof}

  \section{Universal enveloping algebra and symmetric algebra}\label{sec:univ}
\begin{theorem}\label{thm:univorient}
	The Poisson center of the Poisson algebras $\s(\G)$ and $\U(\G)$ are generated by scalars $K$, the free homotopy class of constant curve and the curves homotopic to boundaries and punctures.
\end{theorem}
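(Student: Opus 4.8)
The plan is to characterize the Poisson center of $\U(\G)$ (and $\s(\G)$) by reducing the question to a statement about individual free homotopy classes of curves, and then to exploit the hyperbolic geometry developed in Sections \ref{sec:lift} and \ref{sec:tech}. First I would note that the three distinguished types of classes -- the constant curve, curves homotopic to boundary components, and curves homotopic to punctures -- are genuinely central: the constant curve has vanishing Goldman bracket with everything, and a boundary or peripheral class $z$ can be represented disjointly from any given curve (geometric intersection number zero), so $[z,x]=0$ in $\G$; since the bracket on $\U(\G)$ and $\s(\G)$ is the Leibniz extension, any polynomial in these classes is Casimir. So the substance of the theorem is the reverse inclusion: any Poisson-central element is such a polynomial.

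For the reverse inclusion I would take a nonzero Casimir element $w$ and write it in the Poincaré--Birkhoff--Witt basis $S$ from Theorem \ref{thm:pbw}, i.e. as a $K$-linear combination of ordered monomials $x_1 x_2 \cdots x_n$ in free homotopy classes. The key idea is to bracket $w$ with a single simple closed curve $x$ and extract, using the length and angle functions on Teichmüller space $\T$, enough geometric information to force every curve appearing in $w$ to have zero geometric intersection with every simple closed curve; Lemma \ref{lem:classical} then finishes it. Concretely, for a fixed metric $F$, computing $[x, x_1\cdots x_n]$ via Leibniz gives a sum over pairs (one factor $x_j$ of the monomial, one intersection point $p\in x\cap x_j$) of $\pm$ the monomial with $x_j$ replaced by $x*_p x_j$; by Theorem \ref{thm:beardon} the resulting classes are controlled by $\cosh(l_{x^{?}*_p x_j}/2)$, which grows with $l_x$ and separates the contributions. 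The cleanest route is to twist along $x$: replace the metric $F$ by the left-earthquake deformation, so $l_x\to\infty$ while the structure of $\pi$ is unchanged, and analyze which terms of $[x,w]=0$ survive and must cancel in pairs. Lemmas \ref{lem:orienteql}, \ref{lem:signeql}, and \ref{lem:orientextang} are exactly the tools that pin down when two such terms $\la x^m*_p y\ra$ and $\la x^m*_q z\ra$ can be equal: they force $l_y=l_z$, $\theta_p=\theta_q$, $\e_p=\e_q$, and then derive a contradiction from the angle-monotonicity under twisting unless the relevant intersection points do not exist at all. Iterating this over a generating set of simple closed curves (or over a pants-type collection whose complement controls all curves) shows every $x_i$ in every surviving monomial of $w$ must be disjoint from all simple closed curves, hence is constant, boundary-parallel, or peripheral by Lemma \ref{lem:classical}.

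The main obstacle I expect is the bookkeeping of \emph{cancellation}: a single curve $x_j$ of high geometric complexity can contribute many terms to $[x,w]$, and terms coming from different monomials of $w$ can in principle conspire to cancel, so one cannot simply read off a contradiction from one bracket. The way to defeat this is to single out an ``extremal'' term -- e.g. the monomial containing a factor $y$ realizing the maximum of $l_{x*_p y}$ over all relevant $(y,p)$, equivalently the maximum over a suitable combination of $l_y$ and $\cos\theta_p$ -- and argue that the corresponding term in $[x,w]$ cannot be matched by any other, using Lemma \ref{lem:orientextang} to rule out the only candidate matches (those forced to share length and angle data) via a strict angle inequality that survives a twist deformation. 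Handling the symmetric algebra $\s(\G)$ is then essentially identical, since its PBW basis is the same $S$ and the Leibniz-extended bracket has the same leading behavior under the loop product; and the extension to $\s_k(\G)$ from Section \ref{subsec:S_k} follows because the correction term $-k(x\cdot y)\,xy$ involves only the lower-complexity monomial $xy$ and does not affect the extremal term, so the same extremality argument applies verbatim.
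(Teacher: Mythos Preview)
Your proposal is correct and follows essentially the same route as the paper: write a central element in the PBW basis, bracket with powers $x^n$ of an arbitrary simple closed geodesic, single out an extremal intersection point, and use Lemmas~\ref{lem:orienteql}, \ref{lem:signeql}, \ref{lem:orientextang} to rule out every possible cancellation, so that Lemma~\ref{lem:classical} applies. Two tactical differences are worth noting. First, the paper's extremal invariant is the \emph{minimal angle} $\theta_p$ among all intersection points, not the maximal length $l_{x*_p y}$; this matches Lemma~\ref{lem:orientextang} directly, since that lemma produces a strictly smaller angle (your choice works too, via the Beardon formula, but only after Lemma~\ref{lem:orienteql} has forced $l_y=l_z$). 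Second, the operative parameter is the exponent $n$ in $[x^n,Z]$, not an earthquake deformation; the twist along $x$ appears only inside the proof of Lemma~\ref{lem:signeql}. Finally, for $\U(\G)$ the paper does not rerun the argument in the non-commutative setting (where reordering to PBW form produces lower-order terms) but instead observes that the canonical linear isomorphism $\Phi:\s(\G)\to\U(\G)$ intertwines the two adjoint $\G$-actions, so the invariants---hence the Poisson centers---coincide; this is cleaner than the parallel treatment you sketch.
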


\begin{proof}
	Fix a metric $F\in\T$ and choose ${x}$ to be any simple $F$-geodesic. Throughout the proof  we consider $F$-geodesic representatives of each curve.  First we prove the result for $\s(\G)$.
	 Let $Z$ be an element of the center of $\s(\G)$. Then by Theorem \ref{thm:pbw}, $$Z=\sum_{i=1}^m C_i\; {x}_{i_1}{x}_{i_2}\cdots {x}_{i_k}$$ where ${x}_{i_1}\leq {x}_{i_2}\leq \cdots  \leq {x}_{i_k}$ for all $i\in\{1,2,\ldots ,m\}$. We have 
	\begin{align*}
		0 &=[{x}^n,Z]\\
		 &=\sum_{i=1}^mC_i\;[x^n,{x}_{i_1}{x}_{i_2}\cdots {x}_{i_k}]\\
		&=\sum_{i=1}^{m}C_i\;
		\sum_{j=1}^{k}n
		\Bigg\{
		\sum_{p_{i_j}\in {x}\cap {x}_{i_j}}\e_{p_{i_j}}
		\Big(
		{x}_{i_1}\cdots {x}_{i_j-1}\la {x}^n*_{p_{i_j}}{x}_{i_j}\ra {x}_{i_j+1}\cdots {x}_{i_k}
		\Big)
		\Bigg\}.	
	\end{align*}
	
	Denote by $\bf{P}$, the set of all intersection points $p_{r_s}$ between $x$ and $x_{r_s}$ for all $r_s$.  
	Consider $p\in \bf{P}$ such that $\theta_p\leq \theta_q$ for all $q\in \bf{P}$. Let $p\in x\cap x_{i_j}$. 
	
	As $\s(\G)$ is commutative, we can rearrange the terms $\{{x}_{i_1}, \ldots  ,{x}_{i_j-1},\la {x}^n*_{p_{i_j}}{x}_{i_j}\ra, {x}_{i_j+1}, \ldots ,{x}_{i_k}\}$ in ascending order with respect to $\leq $ and assume they are elements of the set $S$ of Theorem \ref{thm:pbw}.    
	
	As $[{x}^n,Z]=0$ for all positive integer $n$, from the above expression and Theorem \ref{thm:pbw}, there exists ${x}_{k_l}\neq {x}_{i_j}$ such that  one of the following is true for all but  finitely many positive integers $n$.
	\begin{itemize}
		\item $\la {x}^n*_{p} {x}_{i_j}\ra =\la {x}^n*_{q}{x}_{k_l}\ra$ for some $q\in{x}\cap {x}_{k_l}$. In this case there are two possibilities:\\
		(1) $\e_p=\e_q$ which by Lemma \ref{lem:orientextang} contradicts the minimality assumption of angle $\theta_p.$ \\
		(2) $\e_p=-\e_q$ which is impossible by Lemma \ref{lem:signeql}.		
		\item $\la {x}^n*_{p}{x}_{i_j}\ra=\la {x}_{k_l}\ra$. This is impossible by Theorem \ref{thm:beardon} because the left hand side depends on $n$ but the right hand side is independent of $n$.	
	\end{itemize}  
	Therefore each ${x}_{i_j}$ is disjoint from ${x}$.  As ${x}$ is an arbitrary simple closed geodesic, each ${x}_{i_j}$ is disjoint from every simple closed geodesic on the surface. Hence by Lemma \ref{lem:classical} each ${x}_{i_j}$ is either a constant loop or a loop homotopic to a  puncture or a loop homotopic to a boundary component.



	Now we prove the result for $\U(\G)$. Let $T(\G)$ be the tensor algebra of $\G$. Let $\mathfrak{S}$ be the ideal generated by the elements of the form $x_1\otimes x_2\otimes \cdots \otimes x_n-x_{\sigma(1)}\otimes x_{\sigma(2)}\otimes \cdots \otimes x_{\sigma(n)}$, where $n$ is any positive integer and  $\sigma\in S_n$. Then $\s(\G)=T(\G)/\mathfrak{S}$.
	
	Let $\mathfrak{U}$ be the ideal generated by the elements of the form $x\otimes y-y\otimes x-[x,y]$. Then $\U(\G)=T(\G)/\mathfrak{U}$.
	
	Let $\Phi:\s(\U)\rightarrow \U(\G)$ be the canonical map. By Theorem \ref{thm:pbw}, $\Phi$ is a module isomorphism. 
	
	We have a $\G$ action on $\s(\G)$ obtained by extending the adjoint action of $\G$ on itself by derivations. For $x\in \G$ we denote the action on $\s(\G)$ by $S_x$.
	
	 On the other hand $\G$ also acts on $\U(\G)$ by the following: $\mathrm{for } \,\,  x\in \G \,\, \mathrm{ and } \,\, u\in \U(\G), U_x(u)=xu-ux .$
	 
	 A straightforward computation shows, for all $x\in \G, \Phi\circ S_x=U_{x}\circ \Phi.$ As the center of $\s(\G)$ is exactly $\s(G)^{\G}$ and the center of $\U(\G)$ is exactly $\U(\G)^{\G}$, the center of $\s(\G)$ and $\U(\G)$ are the same. Therefore the result follows from the first part.
	 
%

\end{proof}

The corresponding theorem for $\GW$ was proved in \cite{chas2020lie}. 

\begin{theorem}{\cite[Poisson Center
		Theorem]{chas2020lie}}\label{thm:univunorient}
	The Poisson center of the Poisson algebras $\U(\GW)$  and $\s(\GW)$ are generated by scalars $K$, the free homotopy class of constant curve and the curves homotopic to boundaries and punctures.
\end{theorem}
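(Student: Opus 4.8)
\textbf{Proof proposal for Theorem~\ref{thm:univunorient}.}

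The plan is to mimic the proof of Theorem~\ref{thm:univorient} essentially verbatim, with oriented curves replaced throughout by unoriented curves, and to record only the two or three places where the unoriented setting genuinely differs. First I would fix a metric $F\in\T$, pick an arbitrary simple $F$-geodesic $\x$, and use the Poincar\'e--Birkhoff--Witt theorem for $\GW$ (the unoriented analogue of Theorem~\ref{thm:pbw}) to write a central element $Z\in\s(\GW)$ as a $K$-linear combination of monomials $\x_{i_1}\x_{i_2}\cdots\x_{i_k}$ with $\x_{i_1}\leq\cdots\leq\x_{i_k}$. Expanding $0=[\x^n,Z]$ by the Leibniz rule gives, exactly as before, a sum of terms of the form $\x_{i_1}\cdots\widehat{\x_{i_j}}\cdots\x_{i_k}\cdot\la\x^n*_{p}\x_{i_j}\ra$ over intersection points $p\in\x\cap\x_{i_j}$, with signs. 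I would then let $\mathbf{P}$ be the set of all such intersection points and pick $p\in\mathbf{P}$ of minimal angle $\theta_p$; the PBW freeness of $\s(\GW)$ forces a cancellation partner, which must be another term $\la\x^n*_q\x_{k_l}\ra$ with $\x_{k_l}\neq\x_{i_j}$, or a term not depending on $n$.

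The key point is that Lemmas~\ref{lem:disjoint}, \ref{lem:orienteql}, \ref{lem:signeql} and \ref{lem:orientextang} are all proved for oriented geodesics, so I need to see that they transfer. For an unoriented geodesic $\x$ the loop product $\x*_p\y$ is only defined after choosing orientations and the pairing sign $\e_p$ is only defined up to simultaneous reversal, but the \emph{angle} $\theta_p$ between two geodesics at a point and the \emph{lengths} $l_{\x}, l_{\y}, l_{\x*_p\y}$ are orientation-independent, as is Beardon's formula in Theorem~\ref{thm:beardon} once one notes $\cos\theta$ is insensitive to which of the two supplementary angles one names. Consequently, for each monomial in $Z$ I would lift to a representative with a fixed choice of orientations, apply the oriented lemmas, and observe that the geometric conclusions ($l_{\x_{i_j}}$ disjoint from $\x$, angle inequalities) are statements about unoriented geodesics and hence pass back down. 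The sign lemma (Lemma~\ref{lem:signeql}) and the twist-deformation argument behind it involve $\e_p$; here I would use that reversing the orientation of $\x$ (which is forced when we symmetrize) reverses both $\e_p$ and $\e_q$ together, so the contradiction ``$\e_p=-\e_q$ is impossible'' is replaced by ``$\e_p=-\e_q$ cannot hold compatibly across all orientation choices'', which is the content already packaged in \cite{chas2020lie}.

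From here the argument is identical to the oriented case: every $\x_{i_j}$ appearing in $Z$ is disjoint from the arbitrary simple closed geodesic $\x$, hence disjoint from \emph{every} simple closed geodesic, so by Lemma~\ref{lem:classical} each $\x_{i_j}$ is the constant curve, a curve homotopic to a boundary component, or a curve homotopic to a puncture. This proves the statement for $\s(\GW)$. For $\U(\GW)$ I would run the same algebraic reduction as in Theorem~\ref{thm:univorient}: write $\s(\GW)=T(\GW)/\mathfrak{S}$ and $\U(\GW)=T(\GW)/\mathfrak{U}$, use the PBW module isomorphism $\Phi\colon\s(\GW)\to\U(\GW)$, and check $\Phi\circ S_{\x}=U_{\x}\circ\Phi$ for the adjoint $\GW$-actions, so that the two centers $\s(\GW)^{\GW}$ and $\U(\GW)^{\GW}$ coincide and the result transfers.

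\textbf{Main obstacle.} I expect the only real friction to be bookkeeping the orientation ambiguity in $*_p$ and $\e_p$ when applying Lemmas~\ref{lem:signeql} and \ref{lem:orientextang}: one must be careful that the ``minimal angle'' argument and the twist-deformation monotonicity from \cite{kabiraj2016center} are robust under the simultaneous orientation reversals that the symmetric (unoriented) setting imposes. Since \cite{chas2020lie} already carries this through, the write-up can either cite that paper directly for Theorem~\ref{thm:univunorient} or reproduce the short reduction above; I would do the latter for completeness and flag the orientation-compatibility check as the one nontrivial step.
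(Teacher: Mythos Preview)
The paper does not prove this theorem at all: it is stated with attribution to \cite{chas2020lie}, immediately after the sentence ``The corresponding theorem for $\GW$ was proved in \cite{chas2020lie}.'' So there is nothing to compare against beyond a bare citation, and your proposal already goes further than the paper does.

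That said, your sketch has one slip and one real gap worth flagging. The slip: you claim ``$\cos\theta$ is insensitive to which of the two supplementary angles one names,'' but $\cos(\pi-\theta)=-\cos\theta$, so Beardon's formula in Theorem~\ref{thm:beardon} \emph{does} see orientation reversal---as it must, since $x*_p y$ and $x*_p y^{-1}$ are different curves with different lengths. The gap: when two \emph{unoriented} free homotopy classes coincide, $\la\x^n*_p\x_{i_j}\ra=\la\x^n*_q\x_{k_l}\ra$, lifting to chosen orientations yields either $\la x^n*_p x_{i_j}\ra=\la x^n*_q x_{k_l}\ra$ or $\la x^n*_p x_{i_j}\ra=\la (x^n*_q x_{k_l})^{-1}\ra$. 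Lemmas~\ref{lem:orienteql}--\ref{lem:orientextang} handle only the first alternative; the orientation-reversed alternative needs its own argument, and ``signs flip compatibly'' does not by itself dispose of it. You correctly identify this as the one nontrivial step and defer to \cite{chas2020lie}---which is precisely what the present paper does---so as a write-up your proposal is fine provided you either cite \cite{chas2020lie} outright (as here) or actually supply the unoriented versions of the technical lemmas rather than asserting they transfer.
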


\begin{corollary}\label{cor:V}
	The center of $V_h(\G)$  and $V_h(\GW)$ are generated by scalars $K$, the free homotopy class of constant curve and the curves homotopic to boundaries and punctures. If $K$ is a field and $k\neq 0$ then the same result holds true for   $V_h^k(\G)$  and $V_h^k(\GW)$.
\end{corollary}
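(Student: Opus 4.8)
The plan is to deduce Corollary \ref{cor:V} from Theorem \ref{thm:univorient} and Theorem \ref{thm:univunorient} by exploiting the identifications recalled in Section \ref{subsec:V}, namely that $V_h(\G)\cong \U(K[h]\otimes\G)$ and $V_h(\GW)\cong \U(K[h]\otimes\GW)$, where on $K[h]\otimes\G$ (resp. $K[h]\otimes\GW$) the bracket is $h$ times the Goldman (resp. Thurston--Wolpert--Goldman) bracket. The key observation is that the Lie algebra $K[h]\otimes\G$ over $K[h]$ is obtained from $\G$ by the base change $K\to K[h]$ followed by rescaling the bracket by the central unit-free element $h$, and rescaling a Lie bracket by a scalar does not change its center (the condition $[x,y]=0$ is unaffected by multiplying the bracket by anything). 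So the computation reduces to understanding how the Poisson center of $\U(\gl)$ behaves under the base change $K\to K[h]$.

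First I would record the general fact: if $\gl$ is a Lie algebra over $K$ that is free as a $K$-module with PBW basis $S$ (as in Theorem \ref{thm:pbw}), then $\U(K[h]\otimes\gl)\cong K[h]\otimes_K\U(\gl)$ as $K[h]$-algebras, and under this identification the Poisson center of $\U(K[h]\otimes\gl)$ is $K[h]\otimes_K\mathcal{Z}(\U(\gl))$. This is because an element $Z=\sum_j h^j Z_j$ with $Z_j\in\U(\gl)$ is Poisson-central in $\U(K[h]\otimes\gl)$ if and only if $[w,Z]=0$ for every $w$ in a generating set, and since $h$ is a non-zero-divisor in $K[h]$ and the bracket is $K[h]$-linear, this happens exactly when each $[w,Z_j]=0$, i.e.\ each $Z_j\in\mathcal{Z}(\U(\gl))$. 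Combining this with Theorem \ref{thm:univorient} (resp.\ Theorem \ref{thm:univunorient}), which identifies $\mathcal{Z}(\U(\G))$ (resp.\ $\mathcal{Z}(\U(\GW))$) as the subalgebra generated by scalars, the constant curve, and curves homotopic to boundaries and punctures, we conclude that $\mathcal{Z}(V_h(\G))$ (resp.\ $\mathcal{Z}(V_h(\GW))$) is the $K[h]$-subalgebra generated by those same elements, which over $K[h]$ means "generated by $K[h]$-scalars, the constant curve, and boundary/puncture curves" — and absorbing the $h$ into "scalars" this is exactly the asserted statement.

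For the one-parameter specializations $V_h^k(\gl)=V_h(\gl)/(h-k)V_h(\gl)$ with $K$ a field and $k\neq 0$: here Turaev's remark gives an isomorphism $V_h^k(\gl)\cong\U(\gl)$ of $K$-algebras, obtained by sending $h\mapsto k$; concretely this is the Lie-algebra isomorphism $\gl\to(\gl,k[\,,])$ given by $x\mapsto k^{-1}x$ (available since $k$ is invertible in the field $K$), extended to universal enveloping algebras. Since an isomorphism of Poisson algebras carries the Poisson center isomorphically onto the Poisson center, $\mathcal{Z}(V_h^k(\G))\cong\mathcal{Z}(\U(\G))$ and likewise for $\GW$, and tracking the generators through the (scalar-multiple) isomorphism shows the center is again generated by scalars, the constant curve, and curves homotopic to boundaries and punctures.

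The only genuinely non-routine point is the base-change lemma for the Poisson center, and the essential content there is that $h$ is a non-zero-divisor so that vanishing of a bracket can be checked coefficient-by-coefficient in powers of $h$; everything else is bookkeeping about which $K[h]$-subalgebra is generated by a given set of elements. I expect the main obstacle, if any, to be purely expository: being careful that the generators listed in Theorems \ref{thm:univorient} and \ref{thm:univunorient} are stated "over $K$", so that over $K[h]$ one should say the center is generated by the $K[h]$-module $K[h]$ of scalars together with the topological generators — but this is precisely what "generated by scalars $K$, the constant curve, and boundary/puncture curves" is intended to mean in the statement of the corollary, so no difficulty remains once the base-change lemma is in place.
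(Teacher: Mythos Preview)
Your strategy matches the paper's: invoke the identification $V_h(\G)\cong\U\bigl(K[h]\otimes\G,\,h[\,,\,]\bigr)$ from Section~\ref{subsec:V} and rerun Theorems~\ref{thm:univorient}--\ref{thm:univunorient} over $K[h]$ with the rescaled bracket, using that $h$ is a non-zero-divisor.

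One step, however, is not quite right as written. Your ``general fact'' $\U(K[h]\otimes\gl)\cong K[h]\otimes_K\U(\gl)$ as $K[h]$-algebras holds for the base-changed bracket but \emph{not} for the $h$-rescaled one defining $V_h$; with the rescaled bracket these agree only as $K[h]$-modules (via PBW), not as algebras. Consequently the commutator $[w,Z]$ in $V_h$ is not simply $\sum_j h^j[w,Z_j]_{\U(\gl)}$, and your coefficient-by-coefficient deduction that each $Z_j\in\mathcal Z(\U(\gl))$ does not follow directly. The earlier remark that ``rescaling a Lie bracket by a scalar does not change its center'' concerns the Lie-algebra center, not the center of the enveloping algebra, so it does not bridge this gap.

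The fix is easy and is what the paper's one-line proof is really pointing to: work in the symmetric algebra first, where $\s(K[h]\otimes\gl)=K[h]\otimes_K\s(\gl)$ genuinely holds as commutative Poisson algebras with bracket $h\{\,,\,\}$. There $h\{w,Z\}=0$ gives $\{w,Z\}=0$ (since $h$ is a non-zero-divisor on a free module) and hence $\{w,Z_j\}=0$ for each $j$, so Theorem~\ref{thm:univorient} applies coefficientwise. Then transfer to $V_h$ via the module isomorphism $\Phi$ intertwining the adjoint actions, exactly as in the second half of the proof of Theorem~\ref{thm:univorient}. Your treatment of $V_h^k$ for $K$ a field and $k\neq 0$ is correct.
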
 
\begin{proof}
	The proof follows from Section \ref{subsec:V} and from Theorem \ref{thm:univorient} and Theorem \ref{thm:univunorient}  by replacing $K$ with $K[h]$ and the brackets by $h$ times the brackets.
\end{proof}

\section{Skein algebras of oriented curves} \label{sec:orient}
Consider the  three manifold $\Sigma\times I$. A \emph{knot} in a three manifold is a smooth embedding of $S^1$ in the interior of the three manifold. A \emph{link} in a three manifold is a disjoint finite collection of knots. We also include the empty set as a unique link up to isotopy. Given any link $L$ we denote the number of components of $L$ by $|L|$. 

A triple of links $L_+,L_-,L_0$ is called a \emph{Conway triple} if they are identical outside a ball and inside the ball they appear as shown in Figure \ref{skeinorient}. The crossing inside the ball can be one of two types: (1) mutual crossing between two components or (2) self crossing of a component. For type (1), we have $|L_+|=|L_0|+1$ and for type (2) we have $|L_+|=|L_0|-1$. 

	\begin{figure}[h]
	\centering
	\includegraphics[trim = 30mm 55mm 10mm 15mm, clip, width=10cm]{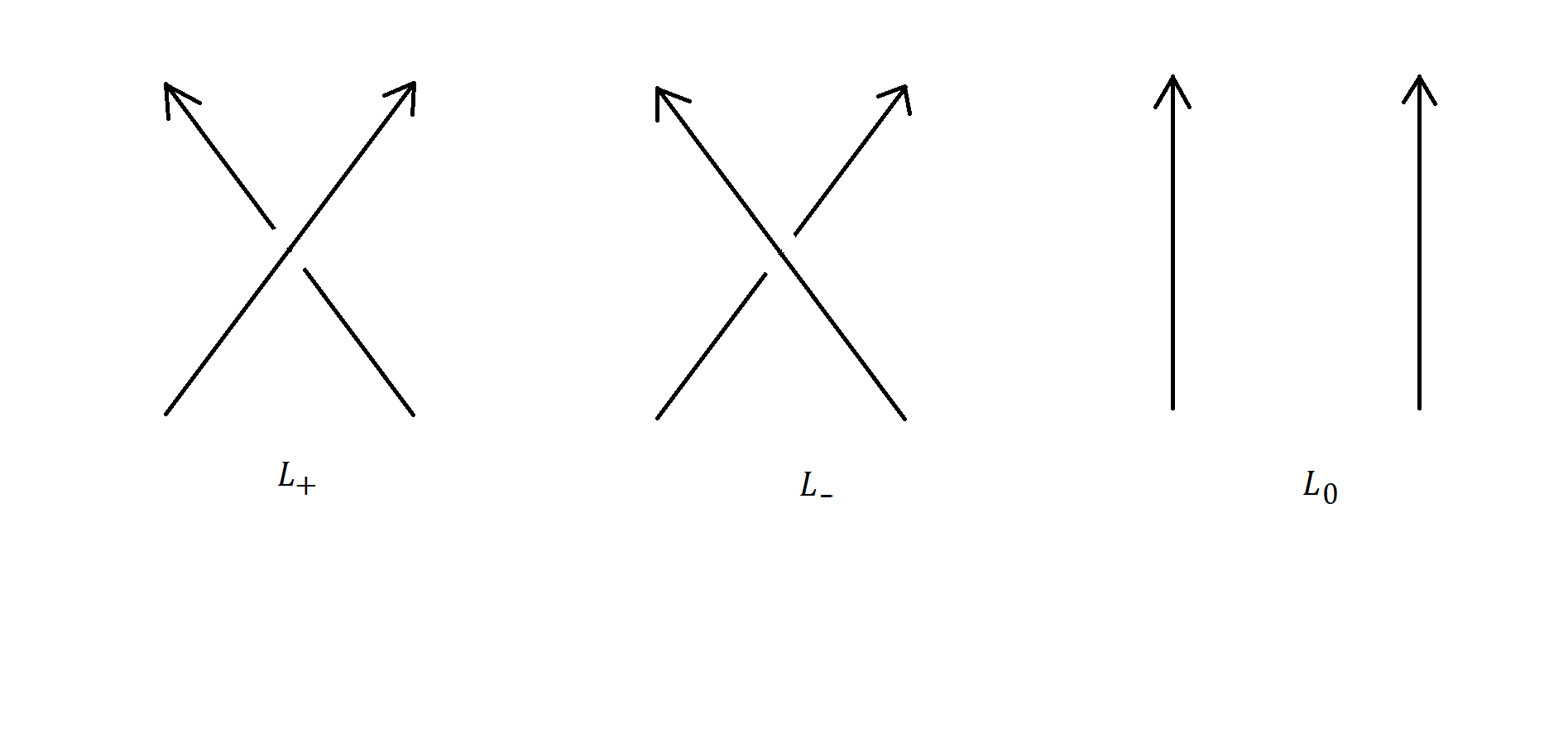}
	\caption{~}\label{skeinorient}
\end{figure}

The \emph{skein module $\ask(\Sigma)$} is a module over the polynomial ring $K[x,x^{-1},h,\hbar]$ defined as follows. Suppose $\mathscr{L}$ be the set of all isotopy classes of oriented links in $\Sigma\times I$. Then $\ask(\Sigma)$ is the quotient of the free $K[x,x^{-1},h,\hbar]$-module generated by $\mathscr{L}$ by the submodule generated by the following relations.  \\
(i)  For Conway triple with crossing type (1) we have the relation $$xL_+-x^{-1}L_--hL_0.$$ 
(ii) For Conway triple with crossing type (2) we have the relation $$xL_+-x^{-1}L_--\hbar L_0.$$


\subsection{Algebra structure on $\ask(\Sigma)$}  We fix the product orientation on $\Sigma\times I.$ We define the product $LL'$ of two links $L$ and $L'$ to be the link $L\cup L'$ obtained by stacking $L'$ above $L$. This product induces an associative algebra structure in $\ask(\Sigma)$ 
with the class of empty set being identity.

\subsection{Skein algebra {\bf A}($\Sigma$)} The skein algebra {\bf A}($\Sigma$) is defined to be the quotient of $\ask(\Sigma)$ by the ideal $(x-1)\ask(\Sigma)$. Therefore {\bf A}($\Sigma$) is an associative algebra over the polynomial ring $K[h,\hbar]$. 

\begin{theorem}\label{thm:A}
	The center of the skein algebra {\bf A}($\Sigma$)/$\hbar$ {\bf A}($\Sigma$) over $K[h]$ is generated by the empty link, the constant link and the  links which are isotopic to the boundary components or punctures of $\Sigma$.
\end{theorem}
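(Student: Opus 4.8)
The plan is to identify $\mathbf{A}(\Sigma)/\hbar\,\mathbf{A}(\Sigma)$ with the algebra $V_h(\G)$ of Section \ref{subsec:V} and then apply Corollary \ref{cor:V}. Passing to $\mathbf{A}(\Sigma)$ sets $x=1$, and the further quotient by $\hbar$ sets $\hbar=0$; then relation (ii) collapses to $L_+=L_-$. Since changing a self-crossing of a knot in $\Sigma\times I$ is a homotopy, this forces every link to equal a link determined only by the free homotopy classes in $\Sigma$ of the projections of its components — concretely, the product (in the stacking order) of those free homotopy classes. Relation (i) with $x=1$ reads $L_+-L_-=hL_0$, and applied to the crossings separating two consecutive factors of such a product it becomes the defining relation $\la u\ra\la v\ra-\la v\ra\la u\ra=h[\la u\ra,\la v\ra]$ of $V_h(\G)$. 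This is exactly Turaev's skein quantization theorem \cite{turaev1991skein}: sending a link to the ordered product of the free homotopy classes of its components gives an algebra isomorphism $\mathbf{A}(\Sigma)/\hbar\,\mathbf{A}(\Sigma)\cong V_h(\G)$ over $K[h]$, the freeness of $V_h(\G)$ on the PBW-type basis $S$ of Theorem \ref{thm:pbw} (see Section \ref{subsec:V}) guaranteeing that there are no further relations.

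Under this isomorphism the empty link corresponds to $1$, the constant link to the free homotopy class of the constant curve, and a link whose components are isotopic in $\Sigma\times I$ to boundary components or punctures of $\Sigma$ to a product of the corresponding free homotopy classes; conversely each generator of the center supplied by Corollary \ref{cor:V} is a $K[h]$-combination of such products. Since the Poisson bracket on $V_h(\G)=\U(K[h]\otimes\G)$ is the commutator, its Poisson center (computed in Corollary \ref{cor:V}) coincides with its ordinary associative center: $\{a,b\}=0$ for all $a$ means precisely that $b$ is central. Transporting that description across the isomorphism yields that the center of $\mathbf{A}(\Sigma)/\hbar\,\mathbf{A}(\Sigma)$ is generated over $K[h]$ by the empty link, the constant link, and the links isotopic to boundary components or punctures; multiple parallel copies of, or multiple wraps around, a boundary component are powers and products of the corresponding simple classes, so they are already included.

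The step requiring the most care is the first one: making Turaev's identification precise — that modulo the $x=1$, $\hbar=0$ relations an \emph{arbitrary} link in $\Sigma\times I$ reduces to a $K[h]$-combination of ordered products of free homotopy classes of closed curves, and that the resulting map is an isomorphism rather than merely a surjection. Everything downstream of Corollary \ref{cor:V} is then bookkeeping about which free homotopy classes the distinguished links represent. If one prefers not to quote the isomorphism wholesale, the alternative is to repeat, inside $\mathbf{A}(\Sigma)/\hbar\,\mathbf{A}(\Sigma)$ directly, the argument of Theorem \ref{thm:univorient}: write a central element on the PBW-type basis, bracket it with $x^n$ for a simple closed geodesic $x$ and all large $n$, and use Lemmas \ref{lem:signeql} and \ref{lem:orientextang} to rule out cancellations among the resulting terms, forcing every factor to be disjoint from every simple closed geodesic; then Lemma \ref{lem:classical} identifies each factor as constant, boundary-parallel, or peripheral.
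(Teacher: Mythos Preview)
Your proposal is correct and follows exactly the paper's route: invoke Turaev's isomorphism $\mathbf{A}(\Sigma)/\hbar\,\mathbf{A}(\Sigma)\cong V_h(\G)$ (this is \cite[Theorem 4.2]{turaev1991skein}) and then apply Corollary \ref{cor:V}. Your write-up spells out the identification and the bookkeeping in more detail than the paper's one-line proof, but the argument is the same.
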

\begin{proof}
	The proof follows from \cite[Theorem 4.2]{turaev1991skein} and Corollary \ref{cor:V}. 
\end{proof}
\begin{theorem}
	The center of the skein algebra $\ask(\Sigma)/((x-1)\ask(\Sigma)+(h-1)\ask(\Sigma)+\hbar \ask(\Sigma))$ over $K$
	is generated by the empty link, the constant link and the  links which are isotopic to the boundary components or punctures of $\Sigma$.
\end{theorem}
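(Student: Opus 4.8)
The plan is to reduce this statement to the previous theorem (the center of $\mathbf{A}(\Sigma)/\hbar\mathbf{A}(\Sigma)$) by identifying the quotient algebra $\ask(\Sigma)/((x-1)\ask(\Sigma)+(h-1)\ask(\Sigma)+\hbar\ask(\Sigma))$ with an algebra already understood. First I would observe that setting $x=1$ and $\hbar=0$ in $\ask(\Sigma)$ produces exactly $\mathbf{A}(\Sigma)/\hbar\mathbf{A}(\Sigma)$, so imposing the additional relation $h=1$ amounts to specializing the polynomial parameter $h$ to $1$ in that algebra. In other words, the algebra in question is $\big(\mathbf{A}(\Sigma)/\hbar\mathbf{A}(\Sigma)\big)\big/(h-1)$. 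By the identification recorded in Section~\ref{subsec:V} (coming from \cite[Theorem 4.2]{turaev1991skein} together with \cite[Remarks 2, Section 4.6]{turaev1991skein}), $\mathbf{A}(\Sigma)/\hbar\mathbf{A}(\Sigma)$ is isomorphic, as a $K[h]$-algebra, to $V_h(\G)$, and specializing $h=1$ gives $V_h^{1}(\G)$, which — since we take $K$ to be the relevant coefficient ring and $1\neq 0$ — is isomorphic to $\U(\G)$.

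Granting this chain of identifications, the center computation is immediate from Theorem~\ref{thm:univorient}: the center of $\U(\G)$ is generated by the scalars, the class of the constant curve, and the curves homotopic to boundary components or punctures, and under the skein-algebra dictionary these correspond precisely to the empty link, the constant link, and the links whose projections are isotopic to the boundary components or punctures of $\Sigma$. So the structure of the argument mirrors that of Theorem~\ref{thm:A}, with the one extra specialization $h=1$ folded in; concretely I would cite \cite[Theorem 4.2]{turaev1991skein} for the $h,\hbar$-level identification and Corollary~\ref{cor:V} (which already packages the $V_h^k$ case for $K$ a field, $k\neq 0$) for the center.

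The one point requiring care — and what I expect to be the main obstacle — is that Corollary~\ref{cor:V} states the $V_h^k$ result only when $K$ is a field. Here the theorem is phrased over the base ring $K$ (a commutative ring containing $\Z$), not necessarily a field, so I would need either to restrict to the field case or to check directly that the argument of Theorem~\ref{thm:univorient} survives specialization $h\mapsto 1$ over a general ring. The cleanest route is probably to run the PBW-based argument of Theorem~\ref{thm:univorient} directly in the specialized algebra: the essential input is that the relevant monomials remain a free $K$-basis after setting $h=1$ (true, since the PBW basis $S$ of $V_h(\G)$ is a free $K[h]$-basis, hence stays free over $K=K[h]/(h-1)$), and that the bracket relations used — Lemmas~\ref{lem:orienteql}, \ref{lem:signeql}, \ref{lem:orientextang} and Theorem~\ref{thm:beardon} — are geometric statements about geodesics insensitive to the coefficient ring.

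Thus the skeleton is: (1) identify the quotient algebra with $\U(\G)$ (equivalently $V_h^1(\G)$) via Turaev's results; (2) transport the basis and the Lie-bracket structure through this identification; (3) invoke Theorem~\ref{thm:univorient}, or re-derive its conclusion in the specialized setting, to conclude that the center is generated by the asserted links. I would present steps (1)–(2) as a short paragraph citing \cite[Theorem 4.2]{turaev1991skein} and Section~\ref{subsec:V}, and handle (3) by a one-line appeal to Corollary~\ref{cor:V} (with a remark on the general base ring), so that the whole proof fits in a few lines much like the proof of Theorem~\ref{thm:A}.
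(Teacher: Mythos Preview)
Your approach is correct and lands in the same place as the paper, but the paper takes a shorter path: it cites \cite[Corollary 4.5]{turaev1991skein}, which already identifies the quotient $\ask(\Sigma)/((x-1)+(h-1)+\hbar)$ directly with $\U(\G)$ over $K$, and then invokes Theorem~\ref{thm:univorient}. By contrast you go through the two-step route $\mathbf{A}(\Sigma)/\hbar\mathbf{A}(\Sigma)\cong V_h(\G)$ (via \cite[Theorem 4.2]{turaev1991skein}) followed by the specialization $h\mapsto 1$, which forces you to worry about the field hypothesis in Corollary~\ref{cor:V}. That concern is self-inflicted: once you recognize that the specialized algebra is $\U(\G)$ on the nose, you can cite Theorem~\ref{thm:univorient} directly (it holds over any commutative ring $K\supset\Z$) and the issue evaporates. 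Your observation that the PBW basis of $V_h(\G)$ remains free over $K=K[h]/(h-1)$ is exactly what makes $V_h^1(\G)\cong\U(\G)$ without any field assumption, so your workaround is fine too --- but citing Turaev's Corollary 4.5 and Theorem~\ref{thm:univorient} is the cleaner one-line proof.
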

\begin{proof}
	The proof follows from \cite[Corollary 4.5]{turaev1991skein} and Theorem \ref{thm:univorient}.
\end{proof}
\begin{theorem}
	The center of the skein algebra $\ask(\Sigma)/\hbar \ask(\Sigma)$ over $K[x,h]/(x^2-1)$
	is generated by the empty link, the constant link and the  links which are isotopic to the boundary components or punctures of $\Sigma$.
\end{theorem}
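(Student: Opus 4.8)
The plan is to run the same argument as in the proof of Theorem~\ref{thm:A}: identify the skein algebra in question with a scalar extension of $V_h(\G)$, apply Corollary~\ref{cor:V}, and translate the resulting generators back into the language of links. The only new feature is that we keep the variable $x$, but now modulo $x^2-1$, so a little care is needed in the identification step.

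First I would recall from \cite[Section~4]{turaev1991skein} how the defining relations of $\ask(\Sigma)$ degenerate once $\hbar$ is killed. Relation (ii) becomes $xL_+=x^{-1}L_-$, so a self-crossing change multiplies a link by $x^{\pm2}$; after passing to the coefficient ring $K[x,h]/(x^2-1)$ this factor disappears, so every link equals the product of the oriented knots obtained by resolving all of its self-crossings, i.e.\ a monomial in the free homotopy classes of its components. Using $x^2=1$, relation (i) can be rewritten $L_+-L_-=xhL_0$, which is the defining relation of $V_h(\G)=\U(K[h]\otimes\G)$ after the (involutive, coefficient-ring) automorphism $h\mapsto xh$. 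Hence $\ask(\Sigma)/\hbar\ask(\Sigma)$, regarded as an algebra over $K[x,h]/(x^2-1)$, is isomorphic to $V_h(\G)\otimes_{K[h]}K[x,h]/(x^2-1)$; under this isomorphism the empty link goes to the scalar $1$, the constant link to the constant curve, and a link isotopic to a boundary component or a puncture of $\Sigma$ to the corresponding homotopy class in $\G$. This is the content of the appropriate statement in \cite[Section~4]{turaev1991skein}, parallel to the one invoked for Theorem~\ref{thm:A}.

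It then remains to compute $\mathcal{Z}\big(V_h(\G)\otimes_{K[h]}K[x,h]/(x^2-1)\big)$. Since $K[x,h]/(x^2-1)$ is free of rank two over $K[h]$ (with basis $1,x$) and central, one has $\mathcal{Z}(B\otimes_{K[h]}K[x,h]/(x^2-1))=\mathcal{Z}(B)\otimes_{K[h]}K[x,h]/(x^2-1)$ for any associative $K[h]$-algebra $B$: writing a general element as $b_0\otimes 1+b_1\otimes x$ and imposing commutation with every $c\otimes 1$ forces $b_0,b_1\in\mathcal{Z}(B)$, while commutation with $1\otimes x$ is automatic. Taking $B=V_h(\G)$ and applying Corollary~\ref{cor:V}, the center is generated over $K[x,h]/(x^2-1)$ by the constant curve and the curves homotopic to the boundary components and punctures. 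Translating back through the isomorphism of the previous paragraph gives exactly the empty link, the constant link, and the links isotopic to the boundary components or punctures of $\Sigma$, as claimed.

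The step I expect to be the real obstacle --- though it is essentially already in \cite{turaev1991skein} --- is the identification in the second paragraph: one must check that collapsing the self-crossing relation together with imposing $x^2=1$ introduces no further relations among the monomials in free homotopy classes, so that a Poincare--Birkhoff--Witt basis (as in Theorem~\ref{thm:pbw}, with coefficients $K[x,h]/(x^2-1)$) survives intact, and that the extra factor of $x$ in $L_+-L_-=xhL_0$ is harmless. Once that isomorphism is in hand, the rest is the formal scalar-extension computation above, so no genuinely new difficulty arises beyond what was handled for the preceding theorems.
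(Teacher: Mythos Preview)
Your proposal is correct and follows essentially the same approach as the paper: the paper's proof is the single line ``follows from \cite[Section~4.7]{turaev1991skein} and Corollary~\ref{cor:V}'', and what you have written is precisely an unpacking of that citation --- identifying $\ask(\Sigma)/\hbar\ask(\Sigma)$ over $K[x,h]/(x^2-1)$ with (a base change of) $V_h(\G)$ via Turaev's work, then invoking Corollary~\ref{cor:V}. Your extra care with the scalar extension $K[h]\hookrightarrow K[x,h]/(x^2-1)$ and the automorphism $h\mapsto xh$ is sound, and the free-rank-two argument for $\mathcal{Z}(B\otimes R)=\mathcal{Z}(B)\otimes R$ is a clean way to handle it; alternatively one could simply apply Corollary~\ref{cor:V} with the ground ring $K$ replaced by $K[x]/(x^2-1)$, which yields the same conclusion.
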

\begin{proof}
	The proof follows from \cite[Section 4.7]{turaev1991skein} and Corollary \ref{cor:V}. 
\end{proof}
\section{Skein algebras of unoriented curves}\label{sec:unorient}
 Let $R$ be the commutative ring $K[x,x^{-1},h_{-1},h_0,h_1]/(h_0^2-h_{-1}h_1).$  Consider the three manifold $\Sigma\times I$. Let $\mathscr{L}_\Box $ be the set of all regular isotopy classes of unoriented link diagrams. Recall that the regular isotopy is the equivalence relation in the link diagrams generated by 2nd and 3rd Reidemeister moves only. We denote the number of components of a link diagram $D$ by $|D|$. 

The \emph{skein module $\kau(\Sigma)$} is a module over $R$ which is defined to be the quotient of the free $R$-module generated by $\mathscr{L}_\Box$ by the submodule generated by the following relations.  \\
i) $D_+-D_--h_{|D_+|-|D_0|}D_0+h_{|D_+|-|D_\infty |}D_{\infty},$
   where $D_+,D_-,D_0,D_\infty$ are arbitrary set of four non-empty link diagrams which are identical except in the neighbourhood of one crossing where they appear as shown in Figure \ref{skeinunorient} (replace $L$ by $D$).\\
ii) $D'-xD$, where $D'$ and $D$ are arbitrary pair of non-empty link diagrams which are identical except in the neighbourhood of one crossing where they appear as shown in Figure \ref{turaev1}.\\
iii) $h_{-1}\mathcal{O}-(x-x^{-1}+h_0)\Phi$, where $\mathcal{O}$ denotes the link diagram of the trivial knot and $\Phi$ denotes the link diagram of the empty link. 

\begin{figure}[h]
	\centering
	\includegraphics[trim = 0mm 55mm 60mm 20mm, clip, width=6cm]{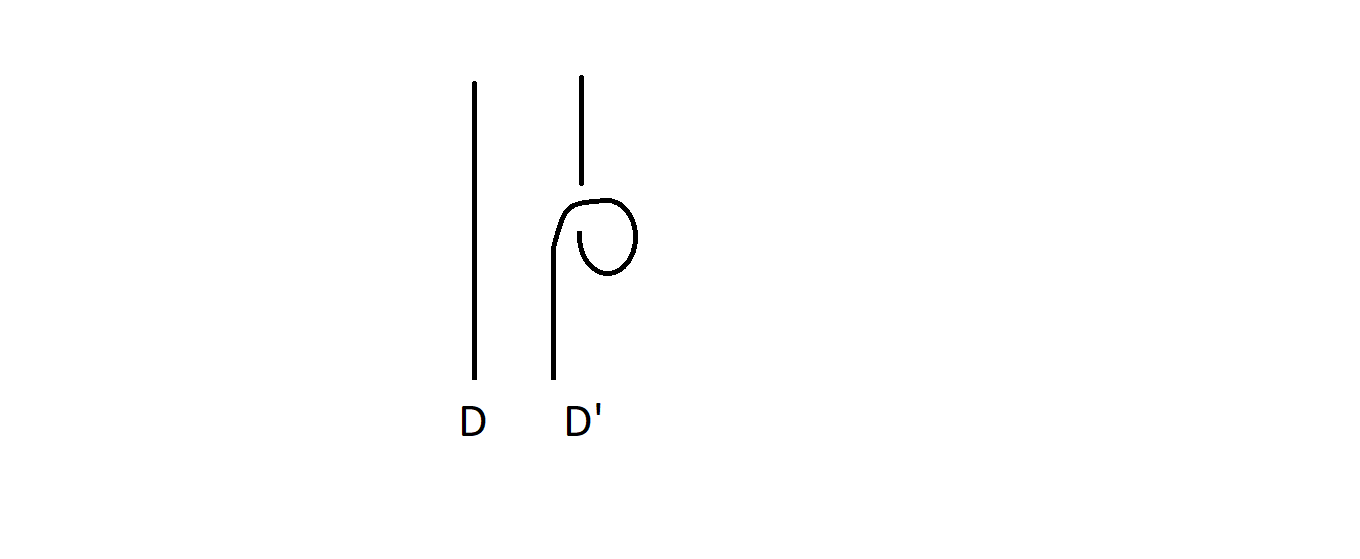}
	\caption{~}\label{turaev1}
\end{figure}

As in the oriented case, we define the product structure on $\kau(\Sigma)$ by stacking one link over another which makes $\kau(\Sigma)$ an associative algebra.

\begin{theorem}
	The center of the skein algebra $\kau(\Sigma)/((x-1)\kau(\Sigma)+h_{-1}\kau(\Sigma) )$ over $K[h_1]$ is generated by the empty link, the constant link and the  links which are isotopic to the boundary components or punctures of $\Sigma$.
\end{theorem}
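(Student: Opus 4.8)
The plan is to reduce the statement to the already-established computation of the Poisson center of $\U(\GW)$ (Theorem \ref{thm:univunorient}) via Turaev's quantization results from \cite{turaev1991skein}. First I would identify, following \cite{turaev1991skein}, the precise relationship between the quotient skein algebra $\kau(\Sigma)/((x-1)\kau(\Sigma)+h_{-1}\kau(\Sigma))$ and the algebra $V_h(\GW)$ (or $V_h^k(\GW)$ for the relevant specialization of $h_1$). Setting $x=1$ and $h_{-1}=0$ in the relations defining $\kau(\Sigma)$ collapses the self-crossing relation and relation (iii), leaving a skein algebra whose mutual-crossing relation becomes, up to rescaling, the Turaev-style relation $D_+-D_- = h_1 D_0$ (after using $h_0^2 = h_{-1}h_1 = 0$, so $h_0$ is nilpotent and in the relevant quotient can be treated accordingly); this is exactly the defining relation of $V_h(\GW)$ with $h$ playing the role of $h_1$ over the ground ring $K[h_1]$. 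I would cite the appropriate statement in \cite{turaev1991skein} (analogous to \cite[Theorem 4.2]{turaev1991skein} used for {\bf A}($\Sigma$)) giving this isomorphism explicitly.

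Once the isomorphism $\kau(\Sigma)/((x-1)\kau(\Sigma)+h_{-1}\kau(\Sigma)) \cong V_h(\GW)$ (over $K[h_1]=K[h]$) is in place, the center computation is immediate: by Corollary \ref{cor:V}, the center of $V_h(\GW)$ is generated by scalars (here $K[h_1]$), the constant curve, and curves homotopic to boundary components or punctures. Under the isomorphism these correspond respectively to $K[h_1]$-multiples of the empty link, the constant link, and the links whose projections are isotopic to boundary components or punctures of $\Sigma$, which is exactly the asserted description. The key step is therefore matching the generators of the center across the isomorphism, which is routine since the isomorphism sends a link diagram (a disjoint union of simple loops) to the corresponding product of unoriented free homotopy classes in $\GW$.

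The main obstacle I anticipate is bookkeeping the exact specialization: one must check that killing $x-1$ and $h_{-1}$ in $\kau(\Sigma)$ genuinely recovers $V_h(\GW)$ on the nose (and not merely a further quotient or a mild variant), paying attention to the nilpotent parameter $h_0$ — since $h_0^2 = h_{-1}h_1$, setting $h_{-1}=0$ does \emph{not} force $h_0=0$, so one should verify whether the intended statement is over $K[h_0,h_1]/(h_0^2)$ or whether $h_0$ is additionally specialized; I would follow \cite{turaev1991skein} on this point and state the ground ring exactly as there. Modulo this identification, which is the content of Turaev's work, the proof is a one-line deduction: \emph{The proof follows from the relevant theorem of \cite{turaev1991skein} identifying this skein algebra with $V_h(\GW)$, together with Corollary \ref{cor:V}.}
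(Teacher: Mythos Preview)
Your proposal is correct and matches the paper's approach: the paper's proof is the one-line ``The proof follows from \cite[Theorem 6.4]{turaev1991skein} and Theorem \ref{thm:univunorient},'' where Turaev's Theorem 6.4 provides exactly the identification of $\kau(\Sigma)/((x-1)\kau(\Sigma)+h_{-1}\kau(\Sigma))$ with $V_h(\GW)$ that you anticipated (and resolves your bookkeeping concern about $h_0$). The only cosmetic difference is that the paper cites Theorem \ref{thm:univunorient} directly rather than its consequence Corollary \ref{cor:V}, but these are interchangeable here.
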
  
\begin{proof}
	The proof follows from \cite[Theorem 6.4]{turaev1991skein} and Theorem \ref{thm:univunorient}.
\end{proof}
\begin{theorem}
	The center of the skein algebra $\kau(\Sigma)/h_0\kau(\Sigma)+h_{-1}\kau(\Sigma) )$ over $K[x,h_1]/(x^2-1)$ is generated by the empty link, the constant link and the  links which are isotopic to the boundary components or punctures of $\Sigma$.
\end{theorem}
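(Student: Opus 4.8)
The plan is to mirror the proof of the immediately preceding theorem and to reduce the claim to Turaev's identification of the relevant quotient of the unoriented skein algebra with the universal enveloping algebra $\U(\GW)$ at the appropriate parameter specialization. Concretely, I would first locate in \cite{turaev1991skein} the statement (presumably the analogue, for the double quotient $\kau(\Sigma)/(h_0\kau(\Sigma)+h_{-1}\kau(\Sigma))$, of the isomorphism already invoked in the proof of the theorem for $\kau(\Sigma)/((x-1)\kau(\Sigma)+h_{-1}\kau(\Sigma))$) asserting that setting $h_0=0$ and $h_{-1}=0$ collapses the skein relations to the relation $D_+ - D_- = h_1 D_0$ on the nugatory-crossing generators, i.e. that the quotient algebra is isomorphic (as a $K[x,h_1]/(x^2-1)$-algebra) to $V_{h_1}(\GW)$, or rather to its further specialization. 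The point is that after killing $h_{-1}$ the last defining relation forces the trivial knot $\mathcal{O}$ to become $x-x^{-1}$ times the empty link, and after killing $h_0$ the four-term relation becomes the two-term homotopy skein relation governing $\GW$; together with $x^2=1$ this is exactly the presentation of the algebra whose center is computed in Corollary \ref{cor:V}.

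Granting that isomorphism, the argument is then purely a transport of structure: the center of an algebra is carried isomorphically onto the center of any algebra isomorphic to it, so the center of $\kau(\Sigma)/(h_0\kau(\Sigma)+h_{-1}\kau(\Sigma))$ is the image, under Turaev's isomorphism, of the center of $V_{h_1}(\GW)$ over the base ring $K[x,h_1]/(x^2-1)$. By Corollary \ref{cor:V} (applied with $K$ replaced by $K[x]/(x^2-1)$ and $h$ by $h_1$), that center is generated by scalars, the constant curve, and the unoriented curves homotopic to boundary components or punctures. Tracking these generators through the isomorphism, scalars correspond to the empty link (times elements of the base ring), the constant loop corresponds to the constant link, and each peripheral or boundary-parallel free homotopy class corresponds to the link in $\Sigma\times I$ whose projection to $\Sigma$ is the corresponding boundary component or puncture loop. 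This yields exactly the asserted list of generators.

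The main obstacle I anticipate is \emph{bookkeeping the parameter specialization correctly}: one must check that the composite quotient $\kau(\Sigma)/(h_0\kau(\Sigma)+h_{-1}\kau(\Sigma))$ really does land on the $\GW$-side rather than on some genuinely different algebra, and in particular that no extra relations are introduced or lost when both $h_0$ and $h_{-1}$ are set to zero simultaneously (the relation $h_0^2 = h_{-1}h_1$ in $R$ is automatically satisfied, so this is consistent, but one should verify that the resulting relation on link diagrams is precisely the skein relation Turaev uses to define the quantization of $\GW$, with $h_1$ playing the role of Turaev's deformation parameter $h$). If \cite{turaev1991skein} does not state this particular double-quotient isomorphism verbatim, I would derive it from the closest stated result there — most likely the one already cited in the proof of the preceding theorem — by the additional substitution $h_0 = 0$, noting that this substitution is compatible with all the defining relations and with the ring structure of $R$. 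Once that identification is in hand, the rest is immediate from Corollary \ref{cor:V}, exactly as in the proofs of the other skein-center theorems in this section.
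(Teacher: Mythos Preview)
Your proposal is correct and follows essentially the same route as the paper: the paper's proof is the one-liner ``follows from the first equation of \cite[Page 653]{turaev1991skein} and Corollary \ref{cor:V},'' which is exactly the Turaev identification of this double quotient with $V_{h_1}(\GW)$ over $K[x,h_1]/(x^2-1)$ together with the center computation for $V_h(\GW)$ that you invoke. Your anticipated bookkeeping concern is resolved in Turaev's paper itself, so no additional derivation is needed.
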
 
\begin{proof}
	The proof follows from the first equation of \cite[Page 653]{turaev1991skein} and Corollary \ref{cor:V}.
\end{proof}
\section{Homotopy skein algebras of $\Sigma\times I$}\label{sec:homskein}
In \cite{hoste1990homotopy}, \emph{homotopy skein modules} for 3-manifolds were introduced. In this section, we recall its definition and its relation with Goldman Lie algebras $\G$ and $\GW$. For this section we assume $K$ to be $\Z[a]$, the integral polynomial ring in variable $a$.

Two links are said to be \emph{link homotopic} in $\Sigma\times I$ if one can be deformed to another by isotopy and componentwise homotopy, i.e. each component is allowed to cross itself but crossing between two distinct component is not allowed. 

 Let $\mathscr{L}_h$ (respectively $\conju{\mathscr{L}}_h$) be the set of all link homotopy classes of oriented (respectively unoriented) links in $\Sigma\times I$, including the empty link. Let $\M(\mathscr{L}_h)$ (respectively $\M\conju{\mathscr{L}}_h$) the free $K$ module generated by $\mathscr{L}_h$ (respectively $\conju{\mathscr{L}}_h$).

\begin{figure}[h]
	\centering
	\includegraphics[trim = 0mm 55mm 10mm 0mm, clip, width=12cm]{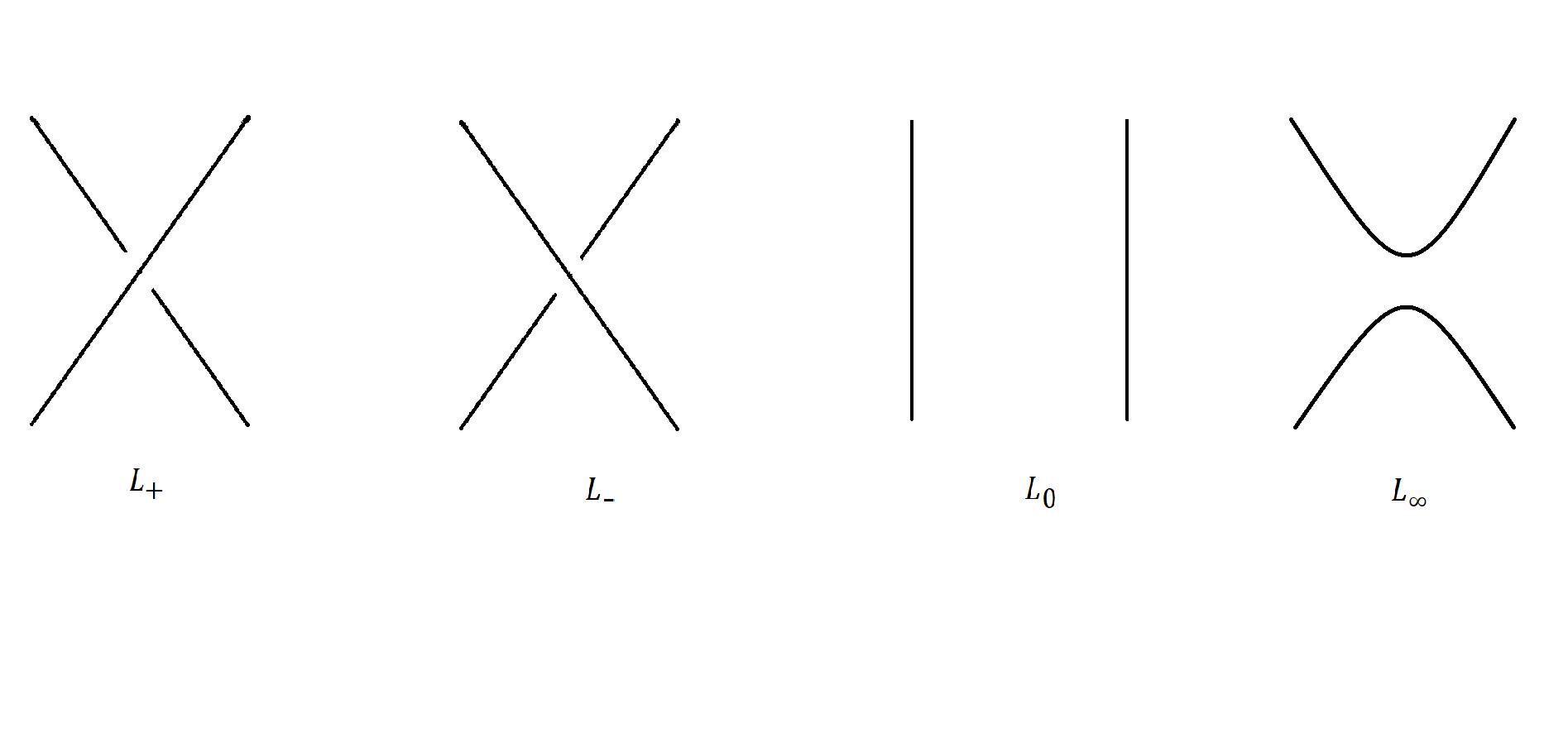}
	\caption{~}\label{skeinunorient}
\end{figure}
Suppose $L_+,L_-$ and $L_0$ are three oriented links which are identical outside a ball and inside the ball they look as in Figure \ref{skeinorient}. Also assume that the two arcs belong to two different components. Let $S(\mathscr{L}_h)$ be the submodule of $\M(\mathscr{L}_h)$ generated by all the relations of the form $L_+-L_--zL_0$. We define the \emph{homotopy skein module} $\mathscr{H}\mathscr{S}(\Sigma\times I)$ to be $\M(\mathscr{L}_h)/S(\mathscr{L}_h).$   

Similarly suppose $L_+,L_-,L_0$ and $L_\infty$ are four unoriented links which are identical outside a ball and inside the ball they look as in Figure \ref{skeinunorient}. Also assume that the two arcs belong to two different components. Let $S(\conju{\mathscr{L}}_h)$ be the submodule of $\M(\conju{\mathscr{L}}_h)$ generated by all the relations of the form $L_+-L_--z(L_0-L_\infty)$. We define the \emph{Kauffman homotopy skein module} $\mathscr{KH}\mathscr{S}(\Sigma\times I)$ to be $\M(\conju{\mathscr{L}}_h)/S(\conju{\mathscr{L}}_h).$

As before, the  $K$ modules $\mathscr{H}\mathscr{S}(\Sigma\times I)$ and $\mathscr{KH}\mathscr{S}(\Sigma\times I)$
admit a natural $K$ algebra structure from \emph{stacking product}. Let $L_1$ and $L_2$ be two oriented (respectively unoriented) links. Define the stacking product $L_1.L_2$ to be the link obtained by placing $L_1$ above $L_2$ (i.e. $L_2\subset \Sigma\times[0,\frac{1}{2}]$ and $L_1\subset \Sigma\times[\frac{1}{2},1]$). 

Consider the Lie brackets $[x,y]_a=a[x,y]$ and $[\conju{x},\conju{y}]_a=a[\conju{x},\conju{y}]$ in $K\pi$ and $K\conju{\pi}$ respectively. Let $\G_a$ (respectively $\GW_a)$ be the Lie algebra $\G$ (respectively $\GW$) with the Lie bracket  $[x,y]_a$ (respectively $[\conju{x},\conju{y}]_a$). It is clear that both $\G_a$ (respectively $\GW_a)$ and $\G$ (respectively $\GW)$ have the same center.  

\begin{theorem}\label{thm:skeincenter}
	The center of $ \mathscr{H}\mathscr{S}(\Sigma\times I)$ (respectively $\mathscr{KH}\mathscr{S}(\Sigma\times I)$) is generated by the empty link, the constant link and the oriented (respectively unoriented) links which are link homotopic to the boundary components or punctures of $\Sigma$.
\end{theorem}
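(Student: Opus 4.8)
The plan is to reduce Theorem~\ref{thm:skeincenter} to the computation of the Poisson center of $\s(\G)$ (resp. $\s(\GW)$) already carried out in Theorem~\ref{thm:univorient} (resp. Theorem~\ref{thm:univunorient}). The key point, due to Hoste--Przytycki \cite{hoste1990homotopy}, is that the homotopy skein algebra $\mathscr{H}\mathscr{S}(\Sigma\times I)$ is isomorphic as a $K$-algebra (with $K=\Z[a]$, $z=a$) to the symmetric algebra $\s(\G_a)$, and similarly $\mathscr{KH}\mathscr{S}(\Sigma\times I)\cong\s(\GW_a)$; under this isomorphism the commutator $L_1L_2-L_2L_1$ in the skein algebra corresponds, modulo higher-order terms in $a$, to $a$ times the Goldman bracket. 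More precisely, the stacking product makes $\mathscr{H}\mathscr{S}(\Sigma\times I)$ a deformation quantization of $\s(\G)$ in the parameter $a$, so that an element is central in the skein algebra exactly when it is a Casimir element for the bracket $[\,,]_a=a[\,,]$. First I would recall/cite this structural result from \cite{hoste1990homotopy}, fixing the $K$-module isomorphism $\Psi:\mathscr{H}\mathscr{S}(\Sigma\times I)\to\s(\G)$ sending a link, resolved into its components via the skein relation, to the product of the free homotopy classes of its components.

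Next I would argue that the center of $\mathscr{H}\mathscr{S}(\Sigma\times I)$ coincides, under $\Psi$, with the Poisson center $\mathcal{Z}(\s(\G_a))$. For one inclusion: if $L$ is central, then $\Psi([L,L'])=0$ for all $L'$; expanding the commutator of links in powers of $a$, the lowest-order term is precisely $a\{\Psi(L),\Psi(L')\}$ (the Goldman bracket extended by Leibniz to $\s(\G)$), so $\{\Psi(L),\Psi(L')\}=0$ for all $L'$, hence $\Psi(L)\in\mathcal{Z}(\s(\G_a))=\mathcal{Z}(\s(\G))$ (these agree since the two brackets differ by the unit $a\in K=\Z[a]$, as noted just before the theorem statement). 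For the reverse inclusion I would use that $\mathcal{Z}(\s(\G))$ is, by Theorem~\ref{thm:univorient}, generated by scalars, the constant curve, and curves homotopic to boundary components and punctures; it then suffices to check directly that each such generator lifts to a link that is literally central in $\mathscr{H}\mathscr{S}(\Sigma\times I)$. A link whose projection is a boundary or puncture curve, pushed into a collar, can be slid vertically past any other link and commutes on the nose with everything (any crossing created in a generic projection is a crossing between the boundary-parallel component and another component lying in a collar neighborhood where it can be isotoped away); the empty link and the constant link are obviously central. Since $\Psi$ is an algebra isomorphism onto its image and respects these distinguished elements, the centers match.

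The same argument applies verbatim to the unoriented case, replacing $\G$ by $\GW$, $\mathscr{H}\mathscr{S}$ by $\mathscr{KH}\mathscr{S}$, the Conway relation by the Kauffman-type relation $L_+-L_--z(L_0-L_\infty)$, and Theorem~\ref{thm:univorient} by Theorem~\ref{thm:univunorient}; note that in the link-homotopy setting the skein relations only involve crossings between distinct components, so the componentwise-homotopy identifications do not interfere with the resolution into free homotopy classes, and the isomorphism $\mathscr{KH}\mathscr{S}(\Sigma\times I)\cong\s(\GW_a)$ holds by \cite[Theorem~3.2]{hoste1990homotopy}.

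The main obstacle I anticipate is making the expansion ``commutator $=$ $a\cdot$(Goldman bracket) $+$ higher order'' precise enough to extract the Casimir condition from centrality: one must argue that the filtration of $\mathscr{H}\mathscr{S}(\Sigma\times I)$ by powers of $a$ (equivalently, the filtration of $\s(\G)$ by polynomial degree transported through $\Psi$) is compatible with the bracket in the way required, and that vanishing of the full commutator forces vanishing of its leading symbol. This is exactly the deformation-quantization statement implicit in \cite{hoste1990homotopy} and \cite{turaev1991skein}, so I would invoke it as a cited fact rather than reprove it; the only genuinely new content is the direct verification that the boundary/puncture-parallel links are central, which is a short geometric observation.
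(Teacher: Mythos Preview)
Your overall strategy---reduce to the Poisson center theorems via the Hoste--Przytycki isomorphism---matches the paper's one-line proof, but your execution contains a genuine error that forces you into an unnecessary detour.

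The Hoste--Przytycki result (\cite[Theorem~3.3]{hoste1990homotopy}) does \emph{not} identify $\mathscr{H}\mathscr{S}(\Sigma\times I)$ with the symmetric algebra $\s(\G_a)$ as a $K$-algebra; that is impossible, since the stacking product is noncommutative while $\s(\G_a)$ is commutative. The correct statement is an \emph{algebra} isomorphism $\mathscr{H}\mathscr{S}(\Sigma\times I)\cong \U(\G_a)$ (and similarly $\mathscr{KH}\mathscr{S}(\Sigma\times I)\cong \U(\GW_a)$, cf.\ \cite[p.~484]{hoste1990homotopy}). Once you use the right target, the center of the skein algebra is \emph{literally} the center of $\U(\G_a)$, and Theorem~\ref{thm:univorient} (or, over $\Z[a]$, the argument of Corollary~\ref{cor:V} with $h$ replaced by $a$) finishes the proof immediately---no filtration, no ``leading symbol'' extraction, and no separate verification of the reverse inclusion.

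Your attempt to salvage the situation via a deformation-quantization expansion has two further problems. First, the step ``vanishing of the full commutator forces vanishing of its leading symbol'' gives only one inclusion; you then try to recover the other by a direct geometric check, which is fine but redundant once the algebra isomorphism is stated correctly. Second, your assertion that ``the two brackets differ by the unit $a\in K=\Z[a]$'' is false: $a$ is not a unit in $\Z[a]$. The equality of centers $\mathcal{Z}(\G_a)=\mathcal{Z}(\G)$ holds because $\Z[a]$ is an integral domain and the underlying modules are free (so one may cancel $a$ in the PBW basis), not because $a$ is invertible.

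In short: replace $\s(\G_a)$ by $\U(\G_a)$, cite \cite[Theorem~3.3 and p.~484]{hoste1990homotopy} for the algebra isomorphism, and invoke Theorem~\ref{thm:univorient} and Theorem~\ref{thm:univunorient}. Everything after your first paragraph then becomes unnecessary.
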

\begin{proof} The proof follows directly from  \cite[Theorem 3.3 and Page 484]{hoste1990homotopy}, Theorem \ref{thm:univorient} and Theorem \ref{thm:univunorient}.
\end{proof}	

\section{Observations}\label{sec:spec} By \cite[Theorem 5.4 and Theorem 5.13]{goldman_invariant_1986}, we can treat the elements of $\G$ and $\GW$ as classical observables on the moduli space of representations. By the results of \cite{turaev1991skein} and \cite{hoste1990homotopy} we can treat the elements on various skein algebras discussed earlier as quantum observables. The degeneration from quantum objects to classical object is simply the natural projection of the links in $\Sigma\times I$ to curves in $\Sigma$. 

For this discussion, we fix a metric in $\Sigma$ and consider geodesics with respect to this metric without specifying it explicitly. Given an isotopy class of a link $L$, we denote its representative in $\ask$ by $[L]$. Given any link $L$ and a crossing $p_i$ of crossing type $(1)$, let $[L^{p_i}_{\e_i}]$ be the element in $\ask$ which is the term corresponding to the symbols $+,-$ or $\infty$ in Figure \ref{skeinorient} depending on whether $\e_i$ is $+,-$ or $\infty$. Given $k$ crossings $p_1,p_2,\ldots ,p_k$ of $L$ of crossing type $(1)$ and $k$ symbols $\e_1,\e_2,\ldots ,\e_k\in\{+,-,\infty \}$  define $[L^{p_1,p_2,\ldots ,p_k}_{\e_1,\e_2,\ldots \e_k}]$ inductively. We call the crossings of Figure \ref{skeinorient} associated with the symbol $+$ and $-$ as the crossings of type $+$ and $-$ respectively.  
   
Let $\pa:\Sigma\times I\rightarrow \Sigma$ be the projection map. Given two elements $[L_1], [L_2]\in \ask$, choose representatives $L_1$ and $L_2$ such that $\pa(L_1)$ and $\pa(L_2)$ are geodesics in $\Sigma$. Let $p_1,p_2,\ldots ,p_r$ be the mutual crossings between $L_1$ and $L_2$ of type $+$ and $q_1,q_2, \ldots ,q_s$ be the mutual crossings between  $L_1$ and $L_2$ of type $-$. Therefore we use the relation associated to Conway Triples with crossing type (1) at these crossings to get the following. Let $L_1L_2=L$. 
\begin{align*}
    x[L_1L_2] & =x^{-1}[L^{p_1}_-]+h[L^{p_1}_0]\\
    	       & =x^{-2}x [L^{p_1}_-]+ h[L^{p_1}_0]\\
    	       & =x^{-2}(x^{-1}[L^{p_1,p_2}_{-,-}]+h[L^{p_2}_{-0}])+ h[L^{p_1}_0]\\ 
    	       & =x^{-3}[L^{p_1,p_2}_{-,-}]+ h(x^{-2}[L^{p_2}_{-0}]+[L^{p_1}_0])\\
    	       & \vdots\\
    	       & =x^{-2r+1}[L^{p_1,p_2,\ldots ,p_r}_{-,-,\ldots ,-}]+h([L^{p_1}_0]+x^{-2}[L^{p_2}_{-0}]+\ldots +x^{-2r}[L^{p_r}_0])   
\end{align*}
Let $M=L^{p_1,p_2,\ldots ,p_r}_{-,-,\ldots ,-}$ and $T= h([L^{p_1}_0]+x^{-2}[L^{p_2}_{-0}]+\ldots +x^{-2r}[L^{p_r}_0])$. Then
\begin{align*}
		x[L_1L_2] & = [M]+T\\
		       & = x^{-2r+2}. x^{-1}[M]+T\\
		       & = x^{-2r+2}([M^{q_1}_{+}]-h[L^{q_1}_0])+T\\
		       & = x^{-2r+3}[M^{q_1}_{+}]-x^{-2r+2}h[L^{q_1}_0]+T\\
		       & \vdots \\
		       & =x^{-2r+2s+1} [M^{q_1,q_2,\ldots q_s}_{+,+,\ldots ,+}]-h(x^{-2r+2}[L^{q_1}_0]+ x^{-2r+4}[L^{q_2}_{0}]+\ldots +x^{-2r+2s}[L^{q_s}_0])+T\\ 
	       \end{align*}
 But $[M^{q_1,q_2,\ldots q_s}_{+,+,\ldots ,+}]=[L^{p_1,\ldots ,p_r,q_1,\ldots q_s}_{-,\ldots ,-,+,\ldots ,+}]=[L_2L_1].$ Putting the values of $M$ and $T$ in the equation we get 
 \begin{align*}
 	x[L_1L_2]-x^{-2r+2s+1}[L_2L_1]&=h([L^{p_1}_0]+x^{-2}[L^{p_2}_{0}]+\ldots +x^{-2r}[L^{p_r}_0]\\
 & ~~~~~~~~~~ -x^{-2r+2}[L^{q_1}_0]- x^{-2r+4}[L^{q_2}_{0}]-\ldots -x^{-2r+2s}[L^{q_s}_0])     
 \end{align*} 
By \cite[Lemma 4.3]{turaev1991skein}, $\pa([L^{p_i}_0])=\pa(L_1)*_{p_i}\pa(L_2)$ and  $\pa([L^{q_i}_0])=\pa(L_1)*_{q_i}\pa(L_2)$. 

If we choose $L_2$ such that $\la\pa(L_2)\ra=\la x^n\ra$ for some simple closed curve $x$ then by the proof of Theorem \ref{thm:univorient}, for sufficiently large $n$, the links  $L^{p_1}_0,\ldots  ,L^{p_1}_0,L^{q_1}_0, \ldots L^{q_1}_0 $ are pairwise non isotopic as the curves $\pa(L^{p_1}_0),\ldots  ,\pa(L^{p_r}_0),\pa(L^{q_1}_0), \ldots ,\pa(L^{q_s}_0) $ are pairwise not freely homotopic to each other. Therefore a necessary condition for the center of $\ask$ or any of its quotient to have elements in the Poisson center different from the classical one is that the elements corresponding to $L^{p_1}_0,\ldots  ,L^{p_r}_0,L^{q_1}_0, \ldots L^{q_s}_0$ must be related to each other via the skein relations. In other words if the skeins appearing on the right hand side of the above equation are linearly independent in  $\ask$ or any of its quotients then their Poisson centers are the algebra generated by  links isotopic to boundary or punctures. The process also works in case of skein algebras of unoriented loops. 
       
	\bibliographystyle{amsplain}
	
	\bibliography{gla.bib}
\end{document}